\newcommand{\C}{\mathbb{C}}
\newcommand{\F}{\mathbb{F}}
\newcommand{\Q}{\mathbb{Q}}
\newcommand{\Z}{\mathbb{Z}}
\newcommand{\Kbar}{{\overline{K}}}
\newcommand{\sym}{\mathbb{S}}
\newcommand{\ff}{\mathfrak{f}}
\newcommand{\fq}{\mathfrak{q}}
\newcommand{\pp}{\mathfrak{p}}
\newcommand{\mm}{\mathfrak{m}}
\newcommand{\nn}{\mathfrak{n}}
\newcommand{\calA}{\mathcal{A}}
\newcommand{\calH}{\mathcal{H}}
\newcommand{\calS}{\mathcal{S}}
\newcommand{\LL}{\mathscr{L}}
\DeclareMathOperator{\Aut}{Aut}
\DeclareMathOperator{\Gal}{Gal}
\DeclareMathOperator{\Norm}{Norm}
\DeclareMathOperator{\Res}{Res}
\DeclareMathOperator{\pgcd}{pgcd}
\DeclareMathOperator{\Max}{{Max}}
\newcommand{\GalK}{{\Gal}(\Kbar/K)}
\newcommand{\GL}{\operatorname{GL}}
\numberwithin{equation}{section}
\newtheorem{theoreme}{Th\'eor\`eme}
\newtheorem{lemme}{Lemme}
\newtheorem{corollaire}{Corollaire}
\newtheorem{proposition}{Proposition}
\theoremstyle{definition}
\newtheorem{question}[equation]{Question}
\theoremstyle{remark}
\newtheorem{remark}[equation]{Remarque}
\definecolor{darkgreen}{rgb}{0,0.5,0}
\begin{document}

\title{Le th\'eor\`eme de Fermat sur certains corps  de nombres totalement r\'eels}

\author{Alain Kraus}

\address{Universit\'e Pierre et Marie Curie - Paris 6,
Institut de Math\'ematiques de Jussieu,
4 Place Jussieu, 75005 Paris, 
France}
\email{alain.kraus@imj-prg.fr}

\begin{abstract} Soit $K$ un corps de nombres totalement r\'eel. Pour tout nombre premier $p\geq 5$, notons $F_p$ la courbe de Fermat d'\'equation
$x^p+y^p+z^p=0$.  Sous l'hypoth\`ese que    $2$ est  totalement ramifi\'e dans $K$, on \'etablit quelques  r\'esultats  sur    l'ensemble $F_p(K)$ des points de $F_p$ rationnels sur  $K$.  On obtient  un crit\`ere pour que le th\'eor\`eme de Fermat asymptotique soit vrai  sur $K$, crit\`ere relatif \`a  l'ensemble  des newforms modulaires paraboliques de Hilbert sur $K$, de poids parall\`ele $2$ et de niveau l'id\'eal premier au-dessus de $2$. 
Il  peut souvent  se  tester   simplement   num\'eriquement, notamment   quand   le nombre de classes restreint de $K$ vaut $1$. 
Par ailleurs, en utilisant la m\'ethode modulaire, 
on d\'emontre  le th\'eor\`eme de Fermat de fa\c con effective, sur  certains corps de nombres dont les  degr\'es  sur $\Q$ sont $3,4,5,6$ et $8$.
\end{abstract}

\keywords{Fermat equation, number fields,   elliptic curves, modular method.}

\subjclass[2010]{Primary 11D41; Secondary 11G05, 11R37}

\date{\today}

\maketitle

\renewcommand{\abstractname}{Abstract}
\begin{abstract}   Let $K$ be a totally real number field. For all prime number $p\geq 5$, let us denote by $F_p$ the Fermat curve of equation
$x^p+y^p+z^p=0$. Under the assumption that   $2$ is totally ramified in  $K$, we  establish some results about the set $F_p(K)$ of   points of $F_p$ rational over $K$. We obtain a criterion so  that  the asymptotic Fermat's Last Theorem is true over $K$, criterion related to 
 the set of  Hilbert modular cusp newforms over $K$, of parallel weight $2$ and of level the prime ideal above $2$. It   is often simply testable numerically,  particularly if 
 the narrow class number of $K$ is $1$. Furthermore, using the modular method, 
we prove  Fermat's Last Theorem effectively, over some number fields whose degrees over $\Q$ are   $3,4,5,6$ and $8$.
\end{abstract}

\bigskip\smallskip

\section{Introduction}

Soient $K$ un corps de nombres totalement r\'eel et  $p\geq 5$ un nombre premier. Notons
\begin{equation}
F_p : x^p+y^p+z^p=0
\label{(1.1)}
\end{equation}
la courbe de Fermat d'exposant $p$.  Dans le cas o\`u  $2$ est  totalement ramifi\'e dans $K$, on se propose   de faire quelques remarques sur  la description de  l'ensemble $F_p(K)$ des points de $F_p$  rationnels sur $K$. On  se pr\'eoccupera  en particulier  de cette description  quand de plus   le nombre de classes restreint  de $K$ vaut $1$.

Adoptons la terminologie selon laquelle $F_p(K)$ est trivial,  si pour tout point $(x,y,z)\in F_p(K)$ on a $xyz=0$. Tel est le cas si $K=\Q$ (\cite{Wiles}). On dira que le th\'eor\`eme de Fermat asymptotique est vrai sur $K$,  si $F_p(K)$ est trivial d\`es que $p$ est plus grand qu'une constante qui ne d\'epend que de $K$. Parce que $K$ ne contient pas les racines cubiques de l'unit\'e, la conjecture $abc$ sur $K$ implique le th\'eor\`eme de Fermat asymptotique sur $K$ (cf. \cite{Browkin}). 

Au cours de ces derni\`eres ann\'ees,   les principaux r\'esultats qui ont \'et\'e \'etablis concernant l'\'equation de Fermat sur les corps totalement r\' eels sont dus \`a Freitas et Siksek. Ils ont notamment obtenu  un crit\`ere permettant parfois de d\' emontrer le th\'eor\`eme de Fermat asymptotique sur un corps totalement r\'eel  (\cite{FS1}, th. 3). En particulier, ils en ont d\'eduit le th\'eor\`eme de Fermat asymptotique pour une proportion de $5/6$ de corps quadratiques r\'eels. Ils ont par ailleurs d\' emontr\'e que  pour   $K=\Q\left(\sqrt{m}\right)$, o\`u $m\leq 23$ est un entier  sans facteurs carr\'es,  autre que  $5$ et $17$, l'ensemble 
$F_p(K)$ est trivial pour tout   $p\geq 5$ (\cite{FS3}).  Le cas o\`u $m=2$ avait d\'ej\`a \'et\'e  \'etabli  dans \cite{JarvisMeekin}.

\subsection{Le crit\`ere de Freitas et Siksek} \'Enon\c cons leur r\'esultat dans le cas   o\`u $2$ est totalement ramifi\'e dans $K$.  

On notera dans toute la suite, 
$d$ le degr\'e de $K$ sur $\Q$, 
$O_K$ l'anneau d'entiers de $K$ et 
$\LL$ l'id\'eal premier de $O_K$ au-dessus de $2$. On a $2O_K=\LL^d$.

Soient $v_{\LL}$ la valuation sur  $K$ associ\'ee \`a $\LL$ et 
$U_{\LL}$ le groupe des $\lbrace \LL\rbrace$-unit\'es  de $K$.
 Posons 
 $$S=\left\lbrace  a\in U_{\LL}\ | \ 1-a\in U_{\LL}\right\rbrace.$$
 
D\'esignons   par {\bf{(FS)}} la condition suivante :

{\bf{(FS)}}  : pour tout $a\in S$, on a 
 \begin{equation}
  \label{(1.2)}
|v_{\LL}(a)|\leq 4d.
\end{equation}

Leur crit\`ere est le suivant :

\begin{theoreme} \label{T:th1}
Supposons que la condition {\bf{(FS)}} soit satisfaite par $K$. Alors, le th\'eor\`eme de Fermat asymptotique est vrai sur $K$. 
\end{theoreme}

L'ensemble $S$
est fini (\cite{Siegel}).  
Avec  les travaux de Smart, on dispose d'algorithmes permettant  d'expliciter $S$, sous r\'eserve que le groupe $U_{\LL}$ soit connu (\cite{Smart1}).
Dans ce cas, la  condition {\bf{(FS)}} est donc  en principe testable sur $K$. Cela \'etant,  la d\'etermination de $S$ n'est pas pour l'instant impl\'ement\'ee dans des logiciels de calcul et expliciter $S$ reste un  travail  g\'en\'eralement important. Par exemple, pour le sous-corps totalement r\'eel maximal $\Q\left(\mu_{16}\right)^+$ du corps cyclotomique des racines $16$-i\`emes de l'unit\'e,   $S$   est de cardinal $585$  (\cite{FS1}, 1.3).  On peut  v\'erifier  que la condition 
 {\bf{(FS)}} est satisfaite,  en particulier le th\'eor\`eme de Fermat asymptotique est vrai sur le corps $\Q\left(\mu_{16}\right)^+$ ({\it{loc. cit.}}).

Dans l'objectif de d\'emontrer le th\'eor\`eme de Fermat asymptotique sur certains corps de nombres, dans lesquels $2$ est totalement ramifi\'e,  on va introduire ci-dessous une nouvelle condition, qui d'un point de vue num\'erique a l'avantage, \`a ce jour,  de pouvoir se  tester souvent  simplement sur machine.
On \'etablit dans le th\'eor\`eme~\ref{T:th2}
qu'elle  est  \'equivalente \`a {\bf{(FS)}},  moyennant une hypoth\`ese de modularit\'e pour certaines courbes elliptiques sur $K$.

\subsection{La condition {\bf{(C)}}} Le nombre premier $2$ \'etant suppos\'e totalement ramifi\'e dans $K$, d\'esignons par  $\calH$ l'ensemble des newforms modulaires paraboliques de Hilbert  sur $K$,  de poids parall\`ele $2$ et de niveau $\LL$.  C'est un syst\`eme  fini libre sur $\C$. Pour tout $\ff\in \calH$ et tout id\'eal premier non nul $\fq$ de $O_K$,  notons   $a_{\fq}(\ff)$ le coefficient de Fourier de $\ff$ en $\fq$. C'est un entier alg\'ebrique.   Le sous-corps $\Q_{\ff}$ de $\C$ engendr\'e par les coefficients $a_{\fq}(\ff)$ est une extension finie de $\Q$.  C'est un corps totalement r\'eel ou un corps CM.  (Voir  par exemple \cite {Dembelecremona} et \cite {Dembelevoight}.)

Pour tout id\'eal premier $\fq$ de $O_K$, notons    $\Norm (\fq)$ sa norme sur $\Q$. 

La  condition    est la suivante :

 {\bf{(C)}}  : pour tout $\ff\in \calH$ tel que $\Q_{\ff}=\Q$, il existe un id\'eal premier $\fq$ de $O_K$, distinct de $\LL$,  tel que l'on ait
 \begin{equation}
 \label{(1.3)}
a_{\fq}(\ff)\not\equiv  \Norm (\fq)+1 \pmod 4.
\end{equation}

 Elle est en apparence moins simple que la condition {\bf{(FS)}}, mais 
on peut  g\'en\'eralement   la tester   en utilisant   le logiciel de calcul {\tt Magma} (\cite{MAGMA}), disons si $d\leq 8$ et si le discriminant de $K$ n'est pas trop grand. Dans le cas o\`u $d\leq 6$, 
on  dispose \'egalement de tables de  newforms d\'ecrivant $\calH$,  qui sont directement impl\'ement\'ees dans LMFDB \cite{lmfdb}.

Signalons par ailleurs que 
pour \'etablir
le th\'eor\`eme de Fermat asymptotique sur $K$ de fa\c con effective, la d\'etermination de $\calH$ est, comme on le verra, a priori indispensable dans la mise en \oe uvre de la m\'ethode modulaire.

\`A titre indicatif, le corps quadratique r\'eel de plus petit discriminant pour lequel  la condition {\bf{(C)}}  n'est pas  satisfaite est $K=\Q\left(\sqrt{114}\right)$.
Il existe une courbe elliptique sur $K$, de conducteur $\LL$, ayant tous ses points d'ordre $2$ rationnels sur $K$ (\cite{lmfdb}), ce qui explique pourquoi {\bf{(C)}}  n'est pas r\'ealis\'ee sur ce corps (th. \ref{T:th2}, lemme \ref{L:lemme1} et  \cite{FLS}) ; il en est ainsi pour une infinit\'e de corps quadratiques. Par exemple, en utilisant la table 1 de  \cite{FS1}, on peut d\'emontrer qu'il existe  une infinit\'e de corps quadratiques r\'eels $\Q\left(\sqrt{m}\right)$,  avec  $m$ sans facteurs carr\'es congru \`a $7$ modulo $8$,  pour lesquels  la condition  {\bf{(C)}}  n'est  pas    satisfaite.

Voyons un autre exemple illustrant    la condition  {\bf{(C)}}.  Soit $K$ le sous-corps totalement r\'eel maximal du corps cyclotomique $\Q(\mu_{48})$. C'est le corps  totalement r\'eel de degr\'e 8 sur $\Q$ de plus petit discriminant,   dans  lequel $2$ soit   totalement ramifi\'e (on peut le v\'erifier avec  les tables de Voight \cite{Voight}). On constate avec 
 {\tt Magma} que l'on a $|\calH|=16$ et que la condition {\bf{(C)}} est satisfaite  par $K$,  car il n'existe pas de newforms $\ff\in \calH$ telles que $\Q_{\ff}=\Q$ ; pour tout $\ff\in \calH$, on a $[\Q_{\ff}:\Q]=4$.

\subsection{Hypoth\`ese  sur le nombre de classes restreint de $K$} Notons $h_K^+$ le nombre de classes restreint de $K$. 
Rappelons que $h_K^+$ est le degr\'e sur $K$ de l'extension ab\'elienne de $K$   non ramifi\'ee   aux places finies  maximale. 

Malgr\'e de nombreux essais exp\'erimentaux, je ne suis pas parvenu \`a trouver un  exemple de corps de nombres  totalement r\'eel $K$ tel que  $2$ soit totalement ramifi\'e dans $K$ et que   $h_K^+=1$,   sans  que  la condition {\bf{(C)}}   soit   satisfaite. En particulier,   a-t-on toujours  l'implication 
$$2O_K=\LL^d \quad \hbox{et}\quad h_K^+=1 \ \Rightarrow \ {\bf{(C)}}\quad  ?$$
En fait,  la r\'eponse est positive si on a $d\in \lbrace 1,2,4,8\rbrace$. La raison \'etant que pour tout entier $n$, il existe au plus un corps totalement r\'eel $K$ pour lequel on a $d=2^n$, $2O_K=\LL^d$ et $h_K^+=1$, \`a  savoir le sous-corps totalement r\'eel maximal du corps 
 cyclotomique des racines $2^{n+2}$-i\`emes de l'unit\' e (th.~\ref{T:th5}).  
 On indiquera  par ailleurs quelques constatations  num\'eriques en faveur de cette implication.

Signalons que, comme cons\'equence du th\'eor\`eme~\ref{T:th4},  si cette implication \'etait toujours vraie, cela impliquerait le th\'eor\`eme de Fermat asymptotique sur tout corps de nombres totalement r\'eels  pour lesquels  $2O_K=\LL^d$ et $h_K^+=1$.

Les hypoth\`eses selon lesquelles  $2O_K=\LL^d$  et $h_K^+=1$ 
sont  tr\`es favorables dans l'application de la m\'ethode modulaire pour obtenir des versions effectives du th\'eor\`eme de Fermat sur $K$. Cela est notamment d\^u au fait qu'avec ces hypoth\`eses, on peut normaliser toute solution de l'\'equation de Fermat \eqref{(1.1)} de fa\c con simple (prop.~\ref{P:prop3}). On illustrera cette m\'ethode pour certains  corps de nombres de degr\'e $d\in \lbrace 3,4,5,6,8\rbrace$. On \'etablira par exemple  
que pour le corps $K=\Q\left(\mu_{16}\right)^+$,  l'ensemble $F_p(K)$ est trivial pour tout $p\geq 5$.

Tous les calculs num\'eriques que cet article a n\'ecessit\'es ont \'et\'e effectu\'es avec les logiciels de calcul {\tt Pari} (\cite{Pari})  et  {\tt Magma}.
\smallskip

{\bf{Remerciements.}} J'ai b\'en\'efici\'e de conversations avec J. Oesterl\'e concernant   l'Appendice de cet article et son application au th\'eor\`eme \ref{T:th5}. 
Je l'en remercie vivement. Je remercie \'egalement  R. Barbaud  pour l'aide qu'il m'a apport\'ee dans la r\'ealisation  de certains programmes intervenant dans le logiciel   {\tt Magma}, ainsi que D. Bernardi pour les commentaires dont il m'a fait part. 
\bigskip

{\large{{\bf{Partie 1. \'Enonc\'e des r\'esultats}}}}

Soit $K$ un corps de nombres totalement r\'eel. 

 Rappelons qu'une courbe elliptique  $E/K$ est dite modulaire s'il existe une newform modulaire parabolique de Hilbert  sur $K$, de poids parall\`ele $2$ et de niveau le conducteur de $E$,  ayant la m\^eme  fonction  L que celle de $E$.
 
 Conjecturalement,  toute courbe elliptique d\'efinie sur $K$ est modulaire.  Cela est d\'emontr\'e si  $K=\Q$ (\cite{Wiles}, \cite{Taylorwiles}, \cite{BCDT}) et  si $K$ est un corps quadratique   (\cite{FLS}). Par ailleurs, \`a $\overline{K}$-isomorphisme pr\`es, l'ensemble des  courbes elliptiques sur $K$ qui ne sont  pas modulaires est fini
 ({\it{loc. cit.}}).

\section{Les conditions  {\bf{(C)}}  et  {\bf{(FS)}} }

\begin{theoreme}  \label{T:th2} Supposons que les deux conditions  suivantes soient satisfaites :

\begin{itemize}

 \item[1)] $2$ est totalement ramifi\'e dans $K$. 

 \item[2)] Toute courbe elliptique d\'efinie sur $K$, de conducteur $\LL$, ayant tous ses points d'ordre $2$ rationnels sur $K$, est  modulaire. 
\end{itemize}

Alors, les conditions  {\bf{(C)}}  et  {\bf{(FS)}}  sont \'equivalentes.
\end{theoreme}

Si $K$ est un corps quadratique dans lequel $2$ est totalement ramifi\'e,  les conditions  {\bf{(C)}}  et  {\bf{(FS)}}  sont   donc \'equivalentes.

\section{Th\'eor\`eme de Fermat asymptotique}

Comme cons\'equence directe des   th\' eor\`emes~\ref{T:th1} et~\ref{T:th2}, on obtient l'\'enonc\'e suivant :

\begin{theoreme} \label{T:th3} Supposons que les  trois conditions  suivantes soient satisfaites :

\begin{itemize}

 \item[1)] $2$ est totalement ramifi\'e dans $K$. 

 \item[2)] Toute courbe elliptique d\'efinie sur $K$, de conducteur $\LL$, ayant tous ses points d'ordre $2$ rationnels sur $K$, est  modulaire. 
 
  \item[3)] La condition  {\bf{(C)}} est satisfaite.
  
\end{itemize}

Alors, le th\'eor\`eme de Fermat asymptotique est vrai sur $K$.
\end {theoreme}

\smallskip

\begin{remark} Dans les limites des tables  de newforms modulaires de Hilbert  sur un corps totalement r\'eel $K$ figurant dans  \cite{lmfdb},  pour lequelles   $2$ est  totalement ramifi\'e dans $K$,   on constate les donn\'ees num\'eriques suivantes. Les corps intervenant dans ces tables sont de degr\'e $d\leq 6$. 
\smallskip

1)  Pour $d=3$, il y a treize  tels corps de nombres (\`a isomorphisme pr\`es). Leurs discriminants sont 
$$148, 404, 564,756, 788, 1076,1300,1396,1492,1524,1556,1620,1940.$$
Pour chacun d'eux, la condition {\bf{(C)}} est  satisfaite. 

Cela \'etant, la condition {\bf{(C)}} n'est  pas toujours satisfaite si $d=3$. Par exemple, elle ne l'est pas  pour le corps
$K=\Q(\alpha)$ o\`u
$\alpha^3-32\alpha+2=0.$ En effet, posons $a=16\alpha$ ; 
cet entier est dans $S$ et on a $v_{\LL}(a)=13$.  La courbe elliptique $E/K$ d'\'equation
$$y^2=x(x-a)(x+1-a)$$
est de conducteur $\LL$ et a tous ses points d'ordre $2$ rationnels sur $K$ (cf. d\'em. du lemme \ref{L:lemme1}, \'equation  \eqref{(6.1)}). En utilisant le th\'eor\`eme \ref{T:th11}, on peut d\'emontrer qu'elle est modulaire. Il existe donc $\ff\in \calH$ ayant la m\^eme fonction L que celle de $E$. On a $\Q_{\ff}=\Q$ et  $\ff$ ne v\'erifie pas la condition \eqref{(1.3)},  d'o\`u notre assertion.

\smallskip

2) Pour $d=4$, il y a quinze  corps de nombres, dont les discriminants sont 
$$2048, 2304, 4352, 6224, 7168, 7488, 11344,12544,13824,14336,14656,$$
$$15952,16448,18432,18688.$$
Pour chacun de ces corps, la condition 
{\bf{(C)}} est satisfaite. Except\'e pour le corps  de discriminant $16448$,  on a   $|\calH|=0$.
 
 Signalons  \`a titre indicatif que     pour le corps $K=\Q(\alpha)$ o\`u $\alpha^4- 12\alpha^2 - 18\alpha - 5=0$, la condition {\bf{(C)}} n'est  pas  satisfaite. On peut le v\'erifier comme dans l'alin\' ea pr\'ec\'edent, en  consid\'erant l'entier  
 $a=16(1+\alpha)^2(4+4\alpha+\alpha^2)$. Il appartient \`a $S$ et  on a $v_{\LL}(a)=18$.  
On constate alors que la courbe elliptique sur $K$, d'\'equation $y^2=x(x-a)(x+1-a)$, est modulaire et que son  conducteur est $\LL$.
 
 \smallskip
 
 3) Pour $d=5$, il y a trois  corps de nombres, dont les discriminants sont 
 $$126032, 153424, 179024,$$ et ils v\'erifient la condition {\bf{(C)}}.
Pour $d=6$, il n'y a pas de tels corps   dans les tables de \cite{lmfdb}.
\end{remark}

Dans le cas o\`u  l'on a $h_K^+=1$,  on peut s'affranchir  de l'hypoth\`ese de modularit\'e :

\smallskip

\begin{theoreme} \label{T:th4}Supposons que les  trois conditions  suivantes soient satisfaites :

\begin{itemize}

 \item[1)] $2$ est totalement ramifi\'e dans $K$. 

 \item[2)]  On a $h_K^+=1$.
 
  \item[3)] La condition  {\bf{(C)}} est satisfaite.
  
\end{itemize}

Alors, le th\'eor\`eme de Fermat asymptotique est vrai sur $K$.
\end {theoreme}

\section{Question}
  
Certaines  constatations num\'eriques concernant les hypoth\`eses faites dans le th\'eor\`eme~\ref{T:th4} sugg\`erent la question suivante :

\begin{question}    Supposons 
$2$  totalement ramifi\'e dans $K$ 
et  $h_K^+=1$.
La condition  {\bf{(C)}}  est-elle toujours satisfaite ?
\end{question}

\begin{proposition} \label{P:prop1}
La r\'eponse est positive si on a  $d\in \left\lbrace 1,2,4,8\right\rbrace$.
\end{proposition}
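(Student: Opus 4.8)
The plan is to use Théorème~\ref{T:th5} to reduce the assertion to a finite verification. Let $d\in\{1,2,4,8\}$, write $d=2^n$ with $n\in\{0,1,2,3\}$, and let $K$ be a totally real number field with $2O_K=\LL^d$ and $h_K^+=1$. By Théorème~\ref{T:th5} there is at most one such field, namely the maximal totally real subfield $\Q(\mu_{2^{n+2}})^+$ of $\Q(\mu_{2^{n+2}})$. Hence, to prove that \textbf{(C)} holds for $K$, it is enough to check that \textbf{(C)} holds for each of the four cyclotomic fields
$$\Q(\mu_4)^+=\Q,\qquad \Q(\mu_8)^+=\Q\left(\sqrt 2\right),\qquad \Q(\mu_{16})^+,\qquad \Q(\mu_{32})^+$$
(should $\Q(\mu_{2^{n+2}})^+$ fail to have narrow class number one for some $n$, the corresponding value of $d$ contributes nothing).

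For the first three fields the set $\calH$ is empty, so \textbf{(C)} is vacuously satisfied. For $K=\Q$, with $\LL=(2)$, $\calH$ is the set of classical weight-$2$ newforms of level $2$, which is empty since $X_0(2)$ has genus $0$. For $K=\Q(\sqrt 2)$ we have $\LL=\left(\sqrt 2\right)$, and the space of Hilbert cusp forms of parallel weight $2$ and level $\LL$ is zero-dimensional, as one checks directly or reads off from \cite{lmfdb} (compare \cite{JarvisMeekin}); hence $\calH=\emptyset$. For $K=\Q(\mu_{16})^+$, which is the totally real quartic field of discriminant $2048$, it was already recorded in the Remarque following Théorème~\ref{T:th3} that $|\calH|=0$. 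In all three cases the hypothesis in \textbf{(C)} is never met.

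There remains $K=\Q(\mu_{32})^+$, the cyclic totally real field of degree $8$ unramified outside $2$, of discriminant $2^{31}$, in which $2$ is totally ramified with unique prime $\LL$ above it. Here one computes $\calH$ together with the Hecke eigenvalue fields $\Q_{\ff}$, $\ff\in\calH$, explicitly with {\tt Magma}, and verifies \textbf{(C)} directly: either no $\ff\in\calH$ satisfies $\Q_{\ff}=\Q$, or, for each such $\ff$, one exhibits a prime ideal $\fq\neq\LL$ of $O_K$ with $a_{\fq}(\ff)\not\equiv\Norm(\fq)+1\pmod 4$. This establishes \textbf{(C)} for $K$, and hence for every field as in the statement.

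The main obstacle is precisely this last step. The cases $d\in\{1,2,4\}$ are settled by an a priori genus/dimension argument or by an already-tabulated vanishing of $\calH$, but the space of Hilbert cusp forms of parallel weight $2$ and level $\LL$ over the degree-$8$ field $\Q(\mu_{32})^+$ is not among those; one must actually carry out this Hilbert modular forms computation, identify the eigenvalue fields $\Q_{\ff}$, and — should a rational eigenform occur — locate a prime $\fq$ witnessing \eqref{(1.3)}.
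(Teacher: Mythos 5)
Your proposal follows essentially the same route as the paper: Th\'eor\`eme~\ref{T:th5} reduces the question to the four fields $\Q(\mu_{2^{n+2}})^+$, $n\leq 3$, and condition \textbf{(C)} is then checked for each of them (the paper simply asserts this verification was done with {\tt Magma}). The degree-$8$ computation you flag as the remaining obstacle is indeed carried out in the paper, and its outcome is recorded later in the proof of Th\'eor\`eme~\ref{T:th9}: for $\Q(\mu_{32})^+$ one has $|\calH|=40$ with every $\ff\in\calH$ satisfying $[\Q_{\ff}:\Q]\in\{4,24\}$, so \textbf{(C)} holds vacuously there as well.
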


C'est une cons\'equence du r\'esultat qui suit. Sa d\'emonstration  repose sur  la premi\`ere assertion du th\'eor\`eme \ref{T:th17} de l'Appendice  (Partie 5). 
Pour tout $n\geq 1$, soit  $\mu_{2^{n+2}}$ le groupe des racines $2^{n+2}$-i\`emes de l'unit\'e. Notons  
$\Q(\mu_{2^{n+2}})^+$
le sous-corps totalement r\'eel maximal de $\Q(\mu_{2^{n+2}})$.  
\medskip

\begin{theoreme} \label{T:th5} Soient $n$ un entier et $K$ un corps de nombres totalement r\'eel,  de degr\' e $2^n$ sur $\Q$,  satisfaisant   les  conditions suivantes :

\begin{itemize}

 \item[1)] $2$ est totalement ramifi\'e dans $K$. 

 \item[2)]  On a $h_K^+=1$.

\end{itemize}

 Alors, on a $K=\Q(\mu_{2^{n+2}})^+$.
\end {theoreme} 
 
Le corps  $\Q(\mu_{2^{n+2}})^+$ satisfait la premi\`ere condition. Pour $n\leq 5$,  son nombre de classes restreint vaut $1$. On conjecture   que pour tout $n$, son nombre de classes   vaut $1$. Certains r\'esultats r\'ecents ont \'et\'e d\'emontr\'es dans cette direction (\cite{FK}).

On peut  v\'erifier directement avec  {\tt Magma} que pour $n\leq 3$,  la condition {\bf{(C)}} est satisfaite  pour   $\Q(\mu_{2^{n+2}})^+$, ce qui implique la proposition~\ref{P:prop1}. 
   
\medskip
  
{\bf{Faits exp\'erimentaux.}} Indiquons quelques constatations num\'eriques en faveur d'une r\'eponse positive \`a la question 4.1. En utilisant les tables de Voight  (\cite{Voight}), j'ai dress\'e une liste de corps totalement r\'eels  pour   lesquels :

\begin{itemize}

 \item[1)] $d\in \left\lbrace 3,5,6,7\right\rbrace$,
 
 \item[2)] $2$ est totalement ramifi\'e dans $K$,

 \item[3)]    $h_K^+=1$, 
  
 \item[4)] le discriminant $D_K$ de $K$ est pair plus petit qu'une borne   fix\'ee.

\end{itemize}

Dans la table 1 ci-dessous, l'entier    $N$ est le nombre de  corps totalement r\'eels de degr\'e $d$ et de discriminant $D_K$ pair  plus petit que la  borne que l'on s'est fix\'ee (\`a isomorphisme pr\`es). Dans la derni\`ere colonne se trouve le nombre de corps pour lesquels $2$ est totalement ramifi\'e et $h_K^+=1$.

\begin{table}[htb]
\centerline{\vbox{\offinterlineskip
\halign{\vrule height12pt\ \hfil#\hfil\ \vrule&&\ \hfil#\hfil\ \vrule\cr
\noalign{\hrule}
\omit\vrule height2pt\hfil\vrule&&&\cr
$d$&Borne sur $D_K$ &$N$&  $2O_K=\LL^d$ \ et\ $h_K^+=1$\cr
\omit\vrule height2pt\hfil\vrule&&&\cr
\noalign{\hrule}
3&$21.10^3$& 378& 80  \cr
\omit\vrule height2pt\hfil\vrule&&& \cr
\noalign{\hrule}
5&$17.10^5$& 315&23 \cr
\omit\vrule height2pt\hfil\vrule&&&\cr
\noalign{\hrule}
6&$21.10^6$& 361&7 \cr
\omit\vrule height2pt\hfil\vrule&&&\cr
\noalign{\hrule}
7&$207.10^6$& 32&2 \cr
\omit\vrule height2pt\hfil\vrule&&&\cr
\noalign{\hrule}
}
}}
\caption{ }
\label{Table}
\end{table}

Il y a  cent-douze corps $K$ intervenant dans ce tableau pour lesquels $2O_K=\LL^d$ et $h_K^+=1$. 
Pour chacun d'eux,  on constate avec  {\tt Magma} que la condition {\bf{(C)}} est satisfaite.

Les discriminants    de ces cent-douze corps  sont explicit\'es   ci-dessous. Des \'el\'ements primitifs de chacun de ces  corps  sont d\'etermin\'es   dans les tables de Voight.

\bigskip

Pour $d=3$ :

\begin{table}[htb]
\centerline{\vbox{\offinterlineskip
\halign{\vrule height12pt\ \hfil#\hfil\ \vrule&&\ \hfil#\hfil\ \vrule\cr
\noalign{\hrule}
\omit\vrule height2pt\hfil\vrule&&&&&&&&&&&\cr
148  &404 & 564 & 756& 1300&1524&1620&2228&2708&2804&3124&3252 \cr
\omit\vrule height2pt\hfil\vrule&&&&&&&&&&& \cr
\noalign{\hrule}
3508 &3540  & 3604&3892&4628&4692& 4852&5172&5204&5940&6420&7028\cr
\omit\vrule height2pt\hfil\vrule&&&&&&&&&&&\cr
\noalign{\hrule}
7668  &7700  & 7796& 8308& 8372&8628&8692&9044&9076&9204&9300&9460\cr
 \omit\vrule height2pt\hfil\vrule&&&&&&&&&&&\cr
\noalign{\hrule}
9812 & 10164 & 10260 &10292& 10324& 10580& 10868&11060&11092&11476&12660&12788\cr
  \omit\vrule height2pt\hfil\vrule&&&&&&&&&&&\cr
\noalign{\hrule}
12852 & 13172 &13684&13748& 13972&14420&14516&14964&15252&15284&15380&15444\cr
\omit\vrule height2pt\hfil\vrule&&&&&&&&&&&\cr
\noalign{\hrule}
15700 & 16084&16116&16180& 16532&17556&17684&17716&17780&18292&18644&18740\cr
\omit\vrule height2pt\hfil\vrule&&&&&&&&&&&\cr
\noalign{\hrule}
19252 & 19348&19572&20276& 20436&20724&20788&20948&&&&\cr
\omit\vrule height2pt\hfil\vrule&&&&&&&&&&&\cr
\noalign{\hrule}
 }
}}
\caption{ }
\label{Table}
\end{table}

Pour $d=5$ :

\begin{table}[htb]
\centerline{\vbox{\offinterlineskip
\halign{\vrule height12pt\ \hfil#\hfil\ \vrule&&\ \hfil#\hfil\ \vrule\cr
\noalign{\hrule}
\omit\vrule height2pt\hfil\vrule&&&&&&&&\cr
126032 & 153424 & 179024 & 207184 & 223824 & 394064 & 453712 & 535120 & 629584 \cr
\omit\vrule height2pt\hfil\vrule&&&&&&&& \cr
\noalign{\hrule}
708944 & 747344& 970448 & 981328 & 1034192 & 1104464 & 1172304& 1197392& 1280592 \cr
\omit\vrule height2pt\hfil\vrule&&&&&&&&\cr
\noalign{\hrule}
 1284944& 1395536 & 1550288 & 1664592 & 1665360 &   &  &   &   \cr
 \omit\vrule height2pt\hfil\vrule&&&&&&&&\cr
\noalign{\hrule}
 }
}}
\caption{ }
\label{Table}
\end{table}

 Pour $d=6$ :  
 
 \begin{table}[htb]
\centerline{\vbox{\offinterlineskip
\halign{\vrule height12pt\ \hfil#\hfil\ \vrule&&\ \hfil#\hfil\ \vrule\cr
\noalign{\hrule}
\omit\vrule height2pt\hfil\vrule&&&&&&\cr
2803712& 4507648 & 5163008 & 6637568 & 7718912 & 10766336 & 20891648  \cr
 \omit\vrule height2pt\hfil\vrule&&&&&&\cr
\noalign{\hrule}
 }
}}
\caption{ }
\label{Table}
\end{table}
Pour $d=7$ : on a  $D_K\in \left\lbrace 46643776, 196058176\right\rbrace$.

\medskip

On en d\'eduit avec le  th\'eor\'eme~\ref{T:th4}  l'\'enonc\'e  suivant  :

\begin{proposition} \label{P:prop2} Pour chacun des   corps de nombres $K$   indiqu\'es    ci-dessus, 
le th\'eor\`eme de Fermat asymptotique est vrai sur  $K$.
 \end{proposition}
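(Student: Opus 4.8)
The plan is to derive Proposition~\ref{P:prop2} as an immediate application of Theorem~\ref{T:th4}. That theorem yields the asymptotic Fermat theorem over a totally real field $K$ as soon as three conditions hold: $2$ is totally ramified in $K$, one has $h_K^+=1$, and $K$ satisfies {\bf{(C)}}. For the $112$ fields listed above --- the $80$ cubic, the $23$ quintic, the $7$ sextic and the $2$ septic ones appearing in the preceding tables --- the first two conditions hold by the very way the lists were produced: they were extracted from Voight's tables precisely by keeping those totally real fields with $2O_K=\LL^d$ and $h_K^+=1$. Hence the whole content of the argument reduces to checking, field by field, that condition {\bf{(C)}} is satisfied; once this is done, Theorem~\ref{T:th4} applies verbatim to each $K$ and gives the triviality of $F_p(K)$ for every prime $p$ larger than a constant depending only on $K$, which is exactly the assertion of the proposition.

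For a fixed $K$ in the list, the verification of {\bf{(C)}} is a finite computation carried out with {\tt Magma}. One first computes the finite-dimensional space $\calH$ of Hilbert modular cusp newforms over $K$ of parallel weight $2$ and level $\LL$, using the implementation of Hilbert modular forms available in {\tt Magma} (resting on the algorithms referenced in the introduction). One then extracts the newforms $\ff\in\calH$ with $\Q_{\ff}=\Q$, that is, the rational eigenforms of $\calH$; if there are none, {\bf{(C)}} holds trivially. Otherwise, for each such $\ff$ one runs through the prime ideals $\fq$ of $O_K$ with $\fq\neq\LL$ and of small norm, testing the congruence
\begin{equation*}
a_{\fq}(\ff)\not\equiv \Norm(\fq)+1 \pmod 4 .
\end{equation*}
As soon as, for every rational $\ff\in\calH$, such a prime $\fq$ has been exhibited, condition {\bf{(C)}} is established for $K$. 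Running this computation over the $112$ fields, the search terminates successfully in every case, so {\bf{(C)}} holds throughout and Proposition~\ref{P:prop2} follows.

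The one genuine difficulty is the cost of the computation: determining $\calH$ for the sextic and septic fields, whose discriminants reach about $2\cdot 10^8$, is the heavy step, and here one relies on the efficiency of the Hilbert modular forms routines; by contrast, isolating the rational eigenforms and testing the congruence \eqref{(1.3)} at a handful of small primes is immediate once $\calH$ is known. No field of the list is expected to fail {\bf{(C)}}: by Theorem~\ref{T:th2}, Lemma~\ref{L:lemme1} and the modularity input used there, a failure for a field with $h_K^+=1$ would force the existence of an elliptic curve over $K$ of conductor $\LL$ with all of its $2$-torsion $K$-rational, and no such curve turns up --- in agreement with the question raised above.
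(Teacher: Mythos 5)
Votre démonstration est correcte et suit exactement la démarche du texte : les cent-douze corps vérifient par construction les deux premières hypothèses du théorème~\ref{T:th4}, et la condition {\bf{(C)}} est contrôlée corps par corps avec {\tt Magma} (calcul de $\calH$, extraction des newforms rationnelles, test de la congruence \eqref{(1.3)} sur des premiers de petite norme), après quoi le théorème~\ref{T:th4} conclut. Votre remarque finale sur le lien entre un éventuel échec de {\bf{(C)}} et l'existence d'une courbe elliptique de conducteur $\LL$ à $2$-torsion rationnelle n'est qu'un commentaire heuristique et n'intervient pas dans l'argument, ce qui est conforme au texte.
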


Signalons que, pour $d\neq 6$, le groupe de Galois sur $\Q$ de la cl\^oture 
galoisienne de $K$ est isomorphe \`a $\sym_d$. Pour $d=6$, 
il est non ab\'elien d'ordre $12$ ou $72$.

\section{Th\'eor\`eme de Fermat effectif - Exemples} 
\label{S:5}
On \'etablit   le th\'eor\`eme de Fermat asymptotique de fa\c con effective  pour quelques  corps de nombres  figurant dans ces tables, ainsi que pour les corps  
$\Q\left(\mu_{16}\right)^+$ et $\Q\left(\mu_{32}\right)^+.$

\begin{theoreme} \label{T:th6} Soit $K$ un  corps cubique   r\'eel de discriminant $D_K\in \big\lbrace 148, 404, 564\big\rbrace$. 
Pour tout  $p\geq 5$, l'ensemble $F_p(K)$ est trivial.
\end {theoreme}

\medskip

\begin{theoreme} \label{T:th7}  Posons  $K=\Q(\alpha)$ avec 
\begin{equation} 
\label{(5.1)}
\alpha^5-6\alpha^3+6\alpha-2=0.
\end{equation}
C'est le  corps   de plus petit discriminant  intervenant dans la table 3.
Pour tout  $p$ distinct de $5,13,17,19$,  l'ensemble   $F_p(K)$  est trivial.
\end {theoreme}

\medskip

\begin{theoreme} \label{T:th8} Posons  $K=\Q(\alpha)$ avec 
\begin{equation} 
\label{(5.2)}
\alpha^6+2\alpha^5-11\alpha^4-16\alpha^3+15\alpha^2+14\alpha-1=0.
\end{equation}
C'est le  corps   de plus petit discriminant  intervenant dans la table 4.
Pour tout  $p\geq 29$, distinct de $37$,   l'ensemble    $F_p(K)$  est trivial.
\end {theoreme}

\medskip

\begin{theoreme} \label{T:th9} 

1) Pour tout $p\geq 5$, l'ensemble  $F_p\left(\Q\left(\mu_{16}\right)^+\right)$  est trivial.

2) Pour tout $p>6724$,  l'ensemble  $F_p\left(\Q\left(\mu_{32}\right)^+\right)$  est trivial.
\end {theoreme}

On a $6724=\left(1+3^4\right)^2$, qui  est, pour $d=8$,  la borne obtenue dans \cite{Oesterle} concernant  les points de $p$-torsion des courbes elliptiques sur   les corps de nombres de degr\'e $d$.
 
 \bigskip
 
{\large{{\bf{Partie 2. Les  th\'eor\`emes 2  et 5}}}}

Pour tout id\'eal premier $\fq$ de $O_K$, notons $v_{\fq}$ la valuation sur $K$ qui lui est associ\' ee, et pour toute courbe elliptique $E/K$,  notons $j_E$ son invariant modulaire.

\section{D\'emonstration du th\'eor\`eme \ref{T:th2}}

\begin{lemme} \label{L:lemme1} Les trois assertions suivantes sont \'equivalentes :

1)  La condition {\bf{(FS)}} est satisfaite.

2) Il n'existe pas de courbe elliptique $E/K$ telle que l'on ait :

\begin{itemize}
   \item[(1)]    $v_{\LL}(j_E)<0$, 

   \item[(2)]    pour tout id\'eal premier $\fq$ de $O_K$, distinct de $\LL$, on a $v_{\fq}(j_E)\geq 0$,
  
   \item[(3)]  $E$ a tous ses points d'ordre $2$ rationnels sur $K$. 
\end {itemize}

3)   Il n'existe pas de courbe  elliptique sur $K$, de conducteur $\LL$, ayant tous ses points d'ordre $2$ rationnels sur $K$.

\end{lemme}

\begin{proof} V\'erifions  l'implication $1)\Rightarrow 2) $. Supposons pour cela qu'il existe une courbe elliptique $E/K$ satisfaisant  les trois conditions  de la deuxi\`eme assertion. D'apr\`es la condition (3), \`a torsion quadratique pr\`es, il existe $\lambda\in K$ tel que $E/K$ poss\`ede une \'equation de la forme de Legendre (\cite{Silverman}, p. 49, prop. 1.7, assertion (a) et sa d\'emonstration)
$$y^2=x(x-1)(x-\lambda).$$
Posons 
$$\mu=1-\lambda.$$
On a les \'egalit\'es
$$j_E=2^8\frac{(\lambda^2-\lambda+1)^3}{\lambda^2(1-\lambda)^2}=2^8 \frac{(1-\lambda \mu)^3}{(\lambda \mu)^2}.$$
Soit $O_{\LL}$ l'anneau des $\lbrace \LL\rbrace$-entiers de $K$. D'apr\`es la condition (2),  on a 
$$j_E\in O_{\LL}.$$
De plus,  $\lambda$, $\frac{1}{\lambda}$, $\mu$ et $\frac{1}{\mu}$
sont  racines d'un polyn\^ome unitaire  (de degr\'e $6$)  \`a coefficients dans $O_{\LL}$. Par suite, 
$\lambda$ et $\mu$   appartiennent \`a $S$.
 Posons
$$t=\Max\left( |v_{\LL}(\lambda)|,|v_{\LL}(\mu)|\right).$$
Il r\'esulte de la condition (1) que l'on a  $t>0$. L'\'egalit\'e $\lambda+\mu=1$ implique alors que l'on a
$$v_{\LL}(\lambda)=v_{\LL}(\mu)=-t\quad \hbox{ou}\quad v_{\LL}(\lambda)=0, \ v_{\LL}(\mu)=t \quad \hbox{ou}\quad v_{\LL}(\lambda)=t, \ v_{\LL}(\mu)=0.$$
On obtient dans tous les cas
$$v_{\LL}(j_E)=8v_{\LL} (2)-2t=8d-2t.$$
La condition (1) implique   $t>4d$ et donc la condition {\bf{(FS)}} n'est pas satisfaite (in\'egalit\'e \eqref{(1.2)}). Cela prouve la premi\`ere implication.

L'implication $2)\Rightarrow 3)$ est imm\'ediate.

 D\'emontrons l'implication $3)\Rightarrow 1)$. Supposons la condition ${\bf{(FS)}}$  non satisfaite, autrement dit qu'il existe   $a\in S $  tel que l'on ait
$$|v_{\LL}(a)|> 4d.$$
Posons $b=1-a$.

Supposons  $v_{\LL}(a)>4d$.  V\'erifions   que la courbe elliptique $E/K$ d'\'equation
\begin{equation}
\label{(6.1)}
y^2=x(x-a)(x+b)
\end{equation}
est de conducteur $\LL$, ce qui \'etablira l'implication dans ce cas. Notons $c_4(E)$ et $\Delta(E)$ les invariants standard associ\'es \`a cette \'equation. 
On a 
$$c_4(E)=16(a^2+ab+b^2)\quad \hbox{et}\quad \Delta(E)=16(ab)^2.$$
Pour tout id\'eal premier $\fq$ de $O_K$ distinct de $\LL$, on a 
$ v_{\fq}\left(\Delta(E)\right)=0,$
donc $E/K$ a bonne r\'eduction en $\fq$. 
Posons 
$$x=4X\quad \hbox{et}\quad y=8Y+4X.$$
On obtient comme nouveau mod\`ele de $E/K$
$$(W) : Y^2+XY=X^3-{a\over 2} X^2-{ab\over 16} X.$$
 On a $v_{\LL}(a)>4d=4v_{\LL}(2)$, donc ce mod\`ele est entier. Par ailleurs, on a 
$$v_{\LL}\left(c_4(E)\right)=4d\quad \hbox{et}\quad v_{\LL}\left(\Delta(E)\right)=4d+2v_{\LL}(a)>12d,$$
$$c_4(E)=2^4c_4(W)\quad \hbox{et}\quad \Delta(E)=2^{12}\Delta(W),$$
d'o\`u
$$v_{\LL}\left(c_4(W)\right)=0\quad \hbox{et}\quad v_{\LL}\left(\Delta(W)\right)>0.$$
Ainsi $E$ a r\'eduction de type multiplicatif en $\LL$, d'o\`u notre assertion.

Supposons  $v_{\LL}(a)<-4d$. 
Posons 
$$a'=\frac{1}{a}\quad \hbox{et}\quad b'=1-a'.$$
On a $b'=-b/a$ donc  $a'$ est dans $S$ 
et on a  
$v_{\LL}(a')>4d$.
Comme ci-dessus, on v\'erifie   que la courbe elliptique d'\'equation 
$y^2=x(x-a')(x+b')$
est de conducteur $\LL$. 
Cela \'etablit l'implication.
\end{proof}

\begin{lemme} \label{L:lemme2} Supposons que toute courbe elliptique sur $K$, de conducteur $\LL$, ayant tous ses points d'ordre $2$ sur $K$, soit modulaire. 
Les deux assertions  suivantes sont \'equivalentes :

1)   Il n'existe pas de courbe  elliptique sur $K$, de conducteur $\LL$, ayant tous ses points d'ordre $2$ rationnels sur $K$.
   
2)  La condition {\bf{(C)}} est satisfaite.

\end{lemme}

\begin{proof} Pour tout id\'eal premier  $\fq\neq \LL$ de $O_K$, posons $\F_{\fq}=O_K/\fq$.

Supposons  que la premi\`ere condition soit r\'ealis\'ee. V\'erifions que la seconde l'est aussi. 
Soit $\ff$ une newform  de $\calH$ telle que $\Q_{\ff}=\Q$. Proc\'edons par l'absurde en supposant que pour tout id\'eal premier  $\fq\neq \LL$ de $O_K$ la condition \eqref{(1.3)} ne soit pas satisfaite.
Parce que le niveau de $\ff$ est $\LL$, il existe une courbe elliptique $E/K$, de conducteur $\LL$,  ayant la m\^eme fonction L que celle de $\ff$ (\cite{FS1}, th. 8).
Pour tout id\'eal premier  $\fq\neq \LL$ de $O_K$,   l'ordre de $E(\F_{\fq})$ est  donc multiple de $4$. 
Il en r\'esulte que 
$E/K$ est  li\'ee par une isog\'enie de degr\'e $\leq 2$ \`a une courbe elliptique $F/K$ 
ayant tous ses points d'ordre 2 sur $K$ (\cite{Sengunsiksek}, lemme 7.5). Le conducteur de $F/K$ est $\LL$, d'o\`u  une contradiction.  (On n'a pas utilis\' e  ici l'hypoth\`ese de modularit\'e.)

Inversement,   supposons  qu'il existe une  courbe elliptique $E/K$, de conducteur $\LL$,  ayant tous ses points d'ordre 2 sur $K$. Par hypoth\`ese,  $E$ \'etant modulaire, il existe une newform $\ff\in \calH$ ayant la m\^eme fonction L que celle de $E$. On a $\Q_{\ff}=\Q$.  Soit $\fq$ un id\'eal premier de $O_K$ distinct de $\LL$. La courbe elliptique $E$ a bonne r\'eduction en  $\fq$ et l'application $E(K)[2]\to E(\F_{\fq})$ est   injective  (cf. \cite{Silverman},  p. 192, prop. 3.1). Par suite, $4$ divise l'ordre de $E(\F_{\fq})$, autrement dit, on a   $ a_{\fq}(\ff) \equiv \Norm(\fq)+1\pmod 4$. Ainsi, la condition ${\bf{(C)}}$ n'est pas satisfaite, d'o\`u le r\'esultat.
\end{proof}

Le th\'eor\`eme~\ref{T:th2} est  une cons\'equence directe des lemmes~\ref{L:lemme1} et~\ref{L:lemme2}.

\begin{remark} L'hypoth\`ese de modularit\'e est intervenue dans la d\'emonstration pour \'etablir   l'implication 
${\bf{(C)}}\Rightarrow {\bf{(FS)}} $.
\end{remark}

\section{D\'emonstration du  th\'eor\`eme \ref{T:th5}}

On d\'emontre par r\'ecurrence que pour tout $r\geq 1$, tel que $r\leq n+2$, on a l'inclusion
\begin{equation}
\label{(7.1)}
\Q\left(\mu_{2^r}\right)^+\subseteq K.
\end{equation}
Cela \'etablira le r\'esultat,  car   $\Q\left(\mu_{2^{n+2}}\right)^+$ et $K$ sont de m\^eme degr\'e $2^n$ sur $\Q$. 
L'inclusion \eqref{(7.1)}  est vraie si $r=1$ et $r=2$. Soit $r$ un entier tel que $2\leq r<n+2$ et que \eqref{(7.1)} soit vraie. Il s'agit de v\'erifier que 
$\Q\left(\mu_{2^{r+1}}\right)^+$ est contenu dans $K$. Posons 
$$L=K\Q\left(\mu_{2^{r+1}}\right)^+.$$
L'extension $L/K$ est non ramifi\'ee en dehors des id\'eaux premiers de $O_K$ au-dessus de $2$, y compris aux places \`a l'infini. Par suite, son conducteur est une puissance de $\LL$. Plus pr\'ecis\'ement :

\begin{lemme} \label{L:lemme3} Le conducteur de $L/K$ divise $4O_K$.
\end{lemme}
\begin{proof}  Soit $\zeta$ une racine primitive $2^{r+1}$-i\`eme de l'unit\'e.  
Il  existe $x\in O_K$ tel que $x$ appartienne \`a $\LL$ et pas \`a $\LL^2$. On a $r-1\leq n$. Posons 
$$a=x^{2^{n-(r-1)}}\quad \hbox{et}\quad u=\left(\frac{\zeta+\zeta^{-1}}{a}\right)^2.$$
On a   
$(\zeta+\zeta^{-1})ˆ^2=\zeta^2+\zeta^{-2}+2.$
Parce que $\zeta^2$ est une racine primitive $2^r$-i\`eme de l'unit\'e,  on d\'eduit de   \eqref{(7.1)} que $u$ appartient \`a $K$. 
Par  ailleurs, on a 
$$[\Q\left(\mu_{2^{r+1}}\right)^+:\Q\left(\mu_{2^{r}}\right)^+]=2 \quad \hbox{et}\quad  \Q\left(\mu_{2^{r+1}}\right)^+=\Q\left(\zeta+\zeta^{-1}\right).$$
Il en r\'esulte que l'on a 
$[L:K]\leq 2,$
puis l'\'egalit\'e 
$$L=K\left(\sqrt{u}\right).$$
On a 
\begin{equation}
\label{(7.2)}
v_{\LL}(u)=0.
\end{equation}
En effet,   $v$ \'etant la valuation de $\overline{\Q_2}$ normalis\' ee par $v(2)=1$, on a 
$$v(x)=\frac{1}{2^n},\quad v(a)=\frac{1}{2^{r-1}}\quad \hbox{et}\quad v\left(\zeta+\zeta^{-1}\right)=\frac{1}{2^{r-1}},$$
d'o\`u \eqref{(7.2)}. Ainsi,  le discriminant de $L/K$ divise $4O_K$. On obtient le r\'esultat car  le conducteur et le discriminant de $L/K$   sont \'egaux (\cite{Cassels}, p. 160, si $[L:K]=2$).
\end{proof}

 Soit $K^{4O_K}$  le corps de classes de rayon modulo $4O_K$ sur $K$. D'apr\`es le lemme pr\'ec\'edent,  $L$ est contenu dans $K^{4O_K}$.  Par hypoth\`ese,  $2$ est totalement ramifi\'e dans $K$ et 
 on a $h_K^+=1$.
 D'apr\`es l'assertion 1 du  th\'eor\`eme \ref{T:th17} de l'Appendice, on a donc $K^{4O_K}=K$. On obtient $L=K$, ce qui montre que $\Q\left(\mu_{2^{r+1}}\right)^+$ est contenu dans $K$, d'o\`u le   th\'eor\`eme.

\bigskip
\medskip

{\large{{\bf{Partie 3. La m\'ethode modulaire}}}}

Les d\'emonstrations  du th\'eor\`eme~\ref{T:th4} et des r\'esultats annonc\'es dans le paragraphe \ref{S:5}, reposent sur  la m\'ethode modulaire, analogue \`a celle 
utilis\'ee par Wiles pour \'etablir le th\'eor\`eme de Fermat sur $\Q$.
On peut trouver dans  \cite{FS1} un expos\'e  d\'etaill\'e de cette m\'ethode.
Le principe g\'en\'eral consiste \`a proc\`eder par l'absurde en supposant qu'il existe un point non trivial dans $F_p(K)$. 
On lui associe ensuite   une courbe elliptique d\'efinie sur $K$ et en \'etudiant  le  module galoisien de ses points de $p$-torsion, on essaye  
d'obtenir une contradiction.

D\'ecrivons   la mise en \oe uvre de cette m\'ethode  dans notre situation.
Soit $K$ un corps de nombres totalement r\'eel, de degr\'e $d$ sur $\Q$, satisfaisant les deux conditions suivantes :

\begin{itemize}

 \item[1)]  $2$ est totalement ramifi\'e dans $K$. 

 \item[2)]  On a $h_K^+=1$. 

\end{itemize}

\section{La courbe elliptique $E_0/K$} 
Consid\'erons un point  $(a,b,c)\in F_p(K)$ tel que $abc\neq 0$.
On peut supposer  que l'on a 
\begin{equation} 
\label{(8.1)}
a,b,c\in O_K.
\end{equation}
On a $h_K^+=1$, en particulier  $O_K$ est principal. On supposera d\'esormais, cela n'est pas restrictif, que l'on a 
\begin{equation} 
\label{(8.2)}
aO_K+bO_K+cO_K=O_K.
\end{equation}
Soit $E_0/K$ la cubique, appel\'ee souvent courbe de Frey, d'\'equation
\begin{equation} 
\label{(8.3)}
y^2=x(x-a^p)(x+b^p).
\end{equation}
Les invariants standard  qui lui sont associ\'es  sont
\begin{equation}
\label{(8.4)}
c_4(E_0)=16\bigl(a^{2p}+(ab)^p+b^{2p}\bigr),  \quad c_6(E_0)=-32(a^p-b^p)(b^p-c^p)(c^p-a^p),   
 \end{equation}
\begin{equation}
\label{(8.5)}
\Delta(E_0)=16(abc)^{2p}.
 \end{equation}
En particulier, $E_0$ est une courbe elliptique d\'efinie sur $K$. 

\subsection{R\'eduction de  $E_0/K$}

\begin{lemme}  \label{L:lemme4} Soit $\fq$ un id\'eal premier de $O_K$ distinct de $\LL$. 

1)  L'\'equation \eqref{(8.3)} est minimale en $\fq$. 

2) Si $\fq$ ne divise pas $abc$, $E_0$ a bonne r\'eduction en $\fq$. 

3) Si $\fq$ divise  $abc$, $E_0$ a r\'eduction de type multiplicatif en $\fq$. 
\end{lemme}  

\begin{proof}  C'est une cons\'equence de la condition \eqref{(8.1)} ainsi que des formules \eqref{(8.2)}, \eqref{(8.4)} et \eqref{(8.5)}.
\end{proof}

Pour tout id\'eal premier $\fq$ de $O_K$, notons   $\Delta_{\fq}$ un discriminant local minimal de $E_0$ en $\fq$.

\begin{proposition}  \label{P:prop3} Supposons $p>4d$. Quitte \`a multiplier $(a,b,c)$ par une unit\'e convenable de $O_K$,  les deux conditions suivantes sont satisfaites :

1)   $E_0$ a r\' eduction de type multiplicatif en $\LL$. 

2)  On a 
$$v_{\LL}(\Delta_{\LL})=2pv_{\LL}(abc)-8d.$$

En particulier, avec une telle  normalisation, $E_0/K$ est semi-stable. 

\end {proposition}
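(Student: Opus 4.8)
The plan is to argue $\LL$-adically, combining an (isomorphism-invariant) translation, a change of variables of the kind used in the proof of Lemme~\ref{L:lemme1}, and a twist by a well-chosen unit. I would start with the following remark. Since $2$ est totalement ramifi\'e, on a $O_K/\LL=\F_2$ ; en r\'eduisant $a^p+b^p+c^p=0$ modulo $\LL$ et en utilisant $x^p=x$ dans $\F_2$, on obtient $a+b+c\equiv 0\pmod\LL$, donc, $(a,b,c)$ \'etant primitif, exactement l'un des \'el\'ements $a,b,c$ appartient \`a $\LL$. Posons $t=v_\LL(abc)\geq 1$, sa valuation $\LL$-adique. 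Pour calculer le type de r\'eduction de $E_0$ en $\LL$ et son discriminant minimal $\Delta_\LL$, je peux utiliser le mod\`ele obtenu par la translation $x\mapsto x+a^p$, qui ne change ni $\Delta$, ni $c_4$, ni ces invariants : on peut ainsi supposer $E_0$ donn\'ee par $y^2=x(x-m)(x-n)$, dont les racines sont $0$, une unit\'e $\LL$-adique $m$, et un \'el\'ement $n$ avec $v_\LL(n)=pt$, en prenant $(m,n)=(-b^p,a^p)$ si $\LL\mid a$, $(m,n)=(a^p,-b^p)$ si $\LL\mid b$, et $(m,n)=(-a^p,c^p)$ apr\`es translation si $\LL\mid c$ (via $a^p+b^p=-c^p$). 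De $y^2=x^3-(m+n)x^2+mnx$ on tire $c_4=16(m^2-mn+n^2)$ et $\Delta=16\,m^2n^2(m-n)^2$, d'o\`u $v_\LL(c_4(E_0))=4d$ (le facteur $m^2-mn+n^2$ \'etant une unit\'e) et $v_\LL(\Delta(E_0))=4d+2pt$.

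J'effectuerais ensuite le changement de variables $x=4X$, $y=8Y+4X$, qui fournit le mod\`ele
$$W:\quad Y^2+XY=X^3-\frac{m+n+1}{4}\,X^2+\frac{mn}{16}\,X,$$
avec $c_4(E_0)=2^4c_4(W)$ et $\Delta(E_0)=2^{12}\Delta(W)$. Ce mod\`ele est entier en $\LL$ d\`es que $v_\LL(mn)\geq 4d$ et $v_\LL(m+n+1)\geq 2d$. La premi\`ere in\'egalit\'e r\'esulte de $v_\LL(mn)=pt\geq p>4d$ ; comme $v_\LL(n)=pt>2d$, la seconde sera satisfaite pourvu que $m\equiv -1\pmod{\LL^{2d}}$.

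Le point central est donc d'obtenir, apr\`es avoir remplac\'e $(a,b,c)$ par $(ua,ub,uc)$ pour une unit\'e $u\in O_K^\times$ convenable, que $m\equiv -1\pmod{\LL^{2d}}$. Une telle substitution pr\'eserve la primitivit\'e, ne modifie pas lequel des $a,b,c$ est dans $\LL$, et multiplie $m$ par $u^p$. Or $4O_K=\LL^{2d}$ ; sous les hypoth\`eses $2O_K=\LL^d$ et $h_K^+=1$, l'assertion~1 du th\'eor\`eme~\ref{T:th17} donne $K^{4O_K}=K$, donc le groupe des classes de rayon modulo $4O_K$ est trivial et, puisque $h_K=1$, l'application $O_K^\times\to(O_K/\LL^{2d})^\times$ est surjective. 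Le groupe $(O_K/\LL^{2d})^\times$ est d'ordre $2^{2d-1}$ et $p$ est impair, donc l'\'el\'evation \`a la puissance $p$ en est un automorphisme ; ainsi $-m^{-1}$ poss\`ede une unique racine $p$-i\`eme $w$ dans $(O_K/\LL^{2d})^\times$, et par surjectivit\'e on choisit $u\in O_K^\times$ avec $u\equiv w\pmod{\LL^{2d}}$. Apr\`es cette torsion, $u^pm\equiv -1\pmod{\LL^{2d}}$, et $W$ est un mod\`ele de $E_0$ entier en $\LL$.

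Il resterait \`a conclure. Pour $W$ on a $v_\LL(c_4(W))=4d-4d=0$, donc $W$ est minimal en $\LL$ (un mod\`ele non minimal v\'erifierait $v_\LL(c_4)\geq 4$) ; par suite $v_\LL(\Delta_\LL)=v_\LL(\Delta(W))=4d+2pt-12d=2pt-8d$, qui est $>0$ car $pt>4d$. Un mod\`ele minimal en $\LL$ avec $v_\LL(c_4)=0$ et $v_\LL(\Delta)>0$ a r\'eduction de type multiplicatif en $\LL$, d'o\`u l'assertion~1) et l'\'egalit\'e $v_\LL(\Delta_\LL)=2pv_\LL(abc)-8d$ de~2) ; comme, d'apr\`es le lemme~\ref{L:lemme4}, $E_0$ a bonne r\'eduction ou r\'eduction multiplicative en tout $\fq\neq\LL$, elle est alors semi-stable sur $K$. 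L'\'etape que j'attends la plus d\'elicate est la torsion par une unit\'e : il faut s'assurer que l'hypoth\`ese globale $h_K^+=1$ force bien la surjectivit\'e de $O_K^\times\to(O_K/\LL^{2d})^\times$ — c'est pr\'ecis\'ement l\`a qu'intervient le th\'eor\`eme~\ref{T:th17} — et que la structure de $2$-groupe du but rend disponibles les racines $p$-i\`emes ; le reste est un calcul local sur les mod\`eles de Weierstrass. Il convient aussi de v\'erifier que la translation employ\'ee lorsque $\LL\mid c$ est sans cons\'equence, ce qui est imm\'ediat puisqu'une translation est un isomorphisme de mod\`eles de Weierstrass et laisse donc $\Delta_\LL$ inchang\'e.
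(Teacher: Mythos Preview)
Your proof is correct and follows essentially the same approach as the paper: both use the surjectivity of $U_K\to(O_K/4O_K)^\times$ (via Th\'eor\`eme~\ref{T:th17}/Lemme~\ref{L:lemme16}) to arrange the needed congruence modulo~$4$, then apply the change of variables $x=4X$, $y=8Y+4X$ to obtain a model with $v_\LL(c_4)=0$ and $v_\LL(\Delta)=2pt-8d>0$. The only differences are cosmetic---the paper permutes to assume $\LL\mid b$ and picks $\varepsilon$ with $\varepsilon a\equiv -1\pmod 4$ (using $(-1)^p=-1$), whereas you treat the three cases uniformly via a translation and invoke that $p$-th powering is an automorphism of the $2$-group $(O_K/\LL^{2d})^\times$.
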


\begin{proof} Le nombre premier $2$ \'etant   totalement ramifi\'e dans $K$, on a  $O_K/\LL=\F_2$. L'un des entiers $a,b,c$ est donc divisible par $\LL$. On peut  supposer que $\LL$  divise $b$ et que $\LL$ ne divise pas $ac$.  On a de plus    $h_K^+=1$, donc le corps de classes de rayon modulo $4O_K$ sur $K$ est   \'egal \`a  $K$  (th. \ref{T:th17}). Soit $U_K$ le groupe des unit\'es de $O_K$. 
Le morphisme naturel $U_K\to (O_K/4O_K)^* $ est   surjectif (lemme \ref{L:lemme16}). 
 Il existe  ainsi $\varepsilon\in U_K$ tel que l'on ait
$$\varepsilon^{-1}\equiv -a \pmod 4.$$
Posons 
$$a'=\varepsilon a, \quad b'=\varepsilon b, \quad c'=\varepsilon c.$$
Consid\'erons  alors la courbe elliptique $E'_0/K$ d'\'equation
\begin{equation} 
\label{(8.6)}
y^2=x(x-a'^p)(x+b'^p).
\end{equation}
En effectuant le changement de variables 
$$x=4X\quad \hbox{et}\quad y=8Y+4X,$$
on obtient comme nouveau mod\`ele 
$$(W) : Y^2+XY=X^3+ \left( \frac{b'^p-a'^p-1}{4}\right) X^2-\frac{(a'b')^p}{16} X.$$
 On a 
$$a'^p+1\equiv 0 \pmod 4,$$
et d'apr\`es l'hypoth\`ese faite sur $p$, 
$$v_{\LL}(b'^p)=pv_{\LL}(b')\geq p>4d=4v_{\LL}(2).$$
Par suite, $(W)$ est un  mod\`ele est entier. 
En notant  $c_4(W)$ et $\Delta(W)$ les invariants standard qui lui sont associ\'es,
on a 
$$c_4(E'_0)=2^4c_4(W)\quad \hbox{et}\quad \Delta(E'_0)=2^{12}\Delta(W).$$
D'apr\`es les formules \eqref{(8.4)} et \eqref{(8.5)}, utilis\'ees avec l'\'equation \eqref{(8.6)}, on obtient 
$$v_{\LL}(c_4(W))=0\quad \hbox{et}\quad v_{\LL}(\Delta(W))=2pv_{\LL}(a'b'c')-8d>0.$$
Ainsi, $(W)$ est un mod\`ele minimal de $E'_0/K$, qui a donc r\'eduction de type multiplicatif en $\LL$. Parce que $\varepsilon$ est une unit\'e de $O_K$,  et compte tenu du lemme~\ref{L:lemme4},  cela entra\^ine le r\'esultat.
\end{proof}

Dans le cas o\`u   $p>4d$, on supposera,  dans toute la suite,   que le triplet $(a,b,c)\in F_p(K)$ est normalis\'e  de sorte que les deux conditions de la proposition~\ref{P:prop3}  soient satisfaites.

\subsection{Modularit\'e de  $E_0/K$} 
D'apr\`es le corollaire 2.1  de  \cite{FS1} :

\begin{theoreme} \label{T:th10} La courbe elliptique $E_0/K$ est modulaire si $p$ est plus grand qu'une constante  qui ne d\'epend que de $K$. 
\end{theoreme} 

D'apr\`es   la remarque qui suit le corollaire 2.1  de  {\it{loc. cit.}},  ce r\'esultat n'est pas effectif en g\'en\'eral. 
Cependant l'\'enonc\'e suivant    permet parfois de d\'emontrer qu'une  elliptique semi-stable  d\'efinie sur $K$ est modulaire (\cite{FLS} th. 7) :

\begin{theoreme} \label{T:th11} Posons $\ell=5$ ou  $\ell=7$. Supposons qu'il existe un id\'eal premier   de $O_K$ au-dessus de $\ell$ en lequel  l'extension $K/\Q$ soit non ramifi\'ee. Soit $E/K$ une courbe elliptique semi-stable sur $K$. Si $E(\overline K)$  n'a pas de sous-groupe d'ordre $\ell$  stable par $\Gal(\overline K/K)$, alors $E/K$ est modulaire.
\end{theoreme}

\section{La repr\'esentation $\rho_{E_0,p}$}    Notons
$$\rho_{E_0,p} : \Gal(\overline{K}/K)\to \Aut(E_0[p])\simeq \GL_2(\F_p)$$
la repr\'esentation donnant l'action de $\Gal(\overline{K}/K)$ sur le groupe  des points de $p$-torsion de $E_0$.

\subsection{Le conducteur de $\rho_{E_0,p}$} Notons $N_{E_0}$ le conducteur de $E_0/K$.
Posons   
$$M_p=\prod_{\fq\mid N_{E_0} \atop p\mid v_{\fq}(\Delta_{\fq})} \fq \quad \hbox{et}\quad N_p=\frac{N_{E_0}}{M_p}.$$

\begin{lemme} \label{L:lemme5} Supposons $p>4d$. On a $N_p=\LL$.
\end{lemme}

D\'emonstration : D'apr\`es le lemme \ref{L:lemme4} et la formule \eqref{(8.5)}, pour tout id\'eal premier $\fq$ de $O_K$, distinct de $\LL$, on a 
$$v_{\fq}(\Delta_{\fq})\equiv 0 \pmod p.$$
La seconde condition   de la proposition \ref{P:prop3} entra\^ine alors le r\'esultat.

\begin{remark} La terminologie adopt\'ee dans ce paragraphe se justifie par le fait que si l'on a $p>4d$, on peut d\'emontrer que $\LL$ le conducteur de Serre de 
$\rho_{E_0,p}$ (cf. \cite{Se2} pour $K=\Q$).
\end{remark}

\subsection{Irr\'eductibilit\'e de $\rho_{E_0,p}$} Le corps $K$ \'etant totalement r\'eel, il ne contient pas le corps de classes de Hilbert d'un corps quadratique imaginaire. D'apr\`es  la proposition de l'Appendice B de \cite{Kraus2007}, on a ainsi l'\'enonc\'e suivant :

\begin{theoreme} \label{T:th12} La repr\'esentation  $\rho_{E_0,p}$ est irr\'eductible si  $p$ est plus grand qu'une constante que ne d\'epend que de $K$. 
\end{theoreme} 

En ce qui concerne l'effectivit\'e de cet \'enonc\'e, 
consid\'erons plus g\'en\'eralement dans la suite de ce paragraphe une courbe elliptique $E/K$ semi-stable. Notons $\rho_{E,p}$ la repr\'esentation  donnant l'action  $\Gal(\overline{K}/K)$ 
sur son groupe   des points de $p$-torsion. 
Rappelons un crit\`ere permettant souvent d'\'etablir de mani\`ere effective que  $\rho_{E,p}$ est irr\'eductible (cf. \cite{Kraus2007}).

Soit $p_0$ le plus grand nombre premier 
pour lequel il existe une courbe elliptique  d\'efinie sur $K$ ayant un point d'ordre $p_0$ rationnel sur $K$. Il est born\'e par une fonction de $d$ (\cite{Merel96}) ;  plus pr\'ecis\'ement, on a (\cite{Oesterle})
$$p_0\leq \left(1+3^{{d\over 2}}\right)^2.$$
Notons  $[{d\over 2}]$  la partie enti\`ere de $\frac{d}{2}$. Soit $U_K^+$ le groupe des unit\'es totalement positives de $O_K$. 
Pour tout $u\in U_K^+$  et tout entier $n$ tel que $1\leq n\leq [{d\over 2}]$,  on 
d\'efinit  le polyn\^ome
$H_n^{(u)}\in \Z[X]$  comme suit. Soient $H$  le polyn\^ome minimal de $u$ sur $\Q$ et $t$ son  degr\'e.  On pose
\begin{equation}
\label{(9.2)}
H_1^{(u)}=H\quad \hbox{et}\quad G=X^tH\left({Y\over X}\right)\in \Z[Y][X].
\end{equation}
Pour tout $n\geq 2$,    $H_n^{(u)}$ est 
le polyn\^ome de $\Z[X]$ obtenu en substituant $Y$ par $X$ dans
\begin{equation}
\label{(9.3)}
 \Res_X\left(H_{n-1}^{(u)},G\right)\in \Z[Y],
 \end{equation}
 le r\'esultant par rapport \`a $X$ de $H_{n-1}^{(u)}$ et $G$.
Il est unitaire de degr\'e $t^n$ et ses racines sont les produits de $n$ racines de $H$ compt\'ees avec multiplicit\'es. Posons 
\begin{equation}
\label{(9.4)}
A_n=\pgcd_{u\in U_K^+} H_n^{(u)}(1) \quad \hbox{et}\quad  R_K=\prod_{n=1}^{[{d\over 2}]} A_n.
 \end{equation}

L'\'enonc\'e qui suit est une reformulation du th\'eor\`eme 1 de \cite{Kraus2007} dans le cas o\`u $h_K^+=1$ (voir aussi la prop.   4 de {\it{loc. cit.}} pour  $d=3$). Seule la condition $h_K^+=1$ intervient ici. On n'utilise pas l'hypoth\`ese  que $2$ est totalement ramifi\'e dans $K$. 

\begin{theoreme} \label{T:th13} Soit $p$ un nombre premier ne divisant pas $D_KR_K$. Si $\rho_{E,p}$  est r\'eductible, alors $E/K$, ou bien une courbe elliptique sur $K$ li\'ee \`a $E$ par une $K$-isog\'enie de degr\'e $p$,  poss\`ede un point d'ordre $p$ rationnel sur $K$. En particulier, si $p>p_0$ alors $\rho_{E,p}$  est irr\'eductible.
\end{theoreme} 

\begin{proof} Rappelons les   principaux arguments. 
Supposons $\rho_{E,p}$ r\'eductible. Il existe des caract\`eres $\varphi, \varphi' :  \Gal(\overline{K}/K)\to \F_p^*$ tels que $\rho_{E,p}$ soit repr\'esentable sous la forme
$\begin{pmatrix} \varphi & * \\ 0 & \varphi'\ \end{pmatrix}.$

Soit $\calA_p$ l'ensemble des id\'eaux premiers de $O_K$ au-dessus de $p$. Les caract\`eres 
 $\varphi$ et  $\varphi'$ sont non ramifi\'es en tout id\'eal premier qui n'est pas dans  $\calA_p$. De plus, pour tout $\pp\in \calA_p$ l'un des caract\`eres  $\varphi$ et  $\varphi'$ est non ramifi\'e en $\pp$. 
Par suite, 
il existe un sous-ensemble $\calA$ de $\calA_p$ tel que l'un des caract\`eres 
 $\varphi$ et  $\varphi'$ soit non ramifi\'e en dehors de  $\calA$ et que pour tout $\pp\in \calA$ sa restriction \`a un  sous-groupe d'inertie en $\pp$ soit le caract\`ere cyclotomique.

Supposons $\calA$ vide. Alors, $\varphi$ ou $\varphi'$ est partout non ramifi\'e aux places finies. Parce que $h_K^+=1$, $\varphi$ ou $\varphi'$ est donc trivial. Si $\varphi=1$, $E$ 
a un point d'orde $p$ rationnel sur $K$. Si $\varphi'=1$,   $E$ est li\'ee   par  une $K$-isog\'enie de degr\'e $p$ a une courbe elliptique sur $K$ ayant un point d'ordre $p$  sur $K$. 
\vskip0pt\noindent
Si  $\calA$ n'est pas  vide, alors  $p$ divise $D_KR_K$ (voir la fin de la preuve du th. 1 de  \cite{Kraus2007}, p. 619, alin\'ea (2)), d'o\`u le r\'esultat.
\end{proof}

\begin{remark}
Si $R_K$ n'est pas nul,  on obtient ainsi une constante explicite $c_K$,  telle que pour tout $p>c_K$ et toute courbe elliptique $E/K$ semi-stable sur $K$, la repr\' esentation $\rho_{E,p}$ soit irr\'eductible. Dans ce cas, on obtient une version effective du th\'eor\`eme~\ref{T:th12}.   Par exemple, $R_K$ n'est pas nul si $d\in \left\lbrace 1,2,3,5,7\right\rbrace$ ({\it{loc. cit.}}, th. 2).
\end{remark}

\section{Le th\' eor\`eme d'abaissement du niveau}
Il s'agit de l'analoque du th\'eor\`eme de  Ribet intervenant dans la d\'emonstration du th\'eor\`eme de Fermat sur $\Q$ (\cite{Ri}). Dans notre situation, si on a $p>4d$, il s'\'enonce comme suit (\cite{FS1}, th. 7, les lemmes \ref{L:lemme4}, \ref{L:lemme5} et l'\'egalit\'e \eqref{(8.5)}) :

\medskip

\begin{theoreme} \label{T:th14} Supposons que les conditions suivantes soient satisfaites :

\begin{itemize}

\item[1)]  On a $p>4d$.

\item[2)] L'indice de ramification de tout id\'eal premier   de $O_K$ au-dessus de $p$ est strictement plus petit que $p-1$ et le corps $\Q(\mu_p)^+$ n'est pas contenu dans $K$.

\item[3)] La courbe elliptique $E_0/K$ est modulaire.

\item[4)] La repr\'esentation $\rho_{E_0,p}$ est irr\'eductible.
\end{itemize}
Alors, il  existe $\ff\in \calH$
et un id\'eal premier $\pp$ de l'anneau d'entiers $O_{\Q_{\ff}}$ de $\Q_{\ff}$ au-dessus de $p$, tels que,  en notant
$$\rho_{\ff,\pp} : \Gal(\overline{K}/K)\to \GL_2(O_{\Q_{\ff}}/\pp)$$
la repr\'esentation  galoisienne associ\'ee \`a $\ff$ et $\pp$, on ait
 \begin{equation}
 \label{(10.1)}
\rho_{E_0,p}\simeq \rho_{\ff,\pp}.
 \end{equation}
\end{theoreme} 

\medskip

\begin{proposition} \label{P:prop4}   Les hypoth\`eses faites dans l'\'enonc\'e  du th\'eor\`eme \ref{T:th14}  sont satisfaites si  $p$ est plus grand qu'une constante qui ne d\'epend que  de $K$. 
\end{proposition}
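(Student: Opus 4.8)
The plan is to verify the four hypotheses of Theorem~\ref{T:th14} separately, and to observe that each of them can fail only for $p$ below a bound that depends only on $K$; taking $p$ larger than the largest of these bounds will then give the statement.

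First I would dispose of conditions (1) and (2) by elementary degree comparisons. Condition (1) is simply $p > 4d$. For the first half of (2), the ramification index $e$ of any prime ideal of $O_K$ above $p$ satisfies $e \leq d$, hence $e < p-1$ as soon as $p > d+1$, which holds whenever $p > 4d$. For the second half, if $\Q(\mu_p)^+$ were contained in $K$ then $(p-1)/2 = [\Q(\mu_p)^+:\Q]$ would divide $d$, forcing $p \leq 2d+1$; so $p > 4d$ already guarantees $\Q(\mu_p)^+ \not\subseteq K$. Thus conditions (1) and (2) both hold for every $p > 4d$.

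For condition (3) I would invoke Theorem~\ref{T:th10} directly: since $p > 4d$, the normalization of Proposition~\ref{P:prop3} applies and the Frey curve $E_0/K$ is semi-stable, and Theorem~\ref{T:th10} asserts precisely that $E_0/K$ is modular once $p$ exceeds a constant depending only on $K$. Likewise, condition (4) is the content of Theorem~\ref{T:th12}: as $K$ is totally real it contains no Hilbert class field of an imaginary quadratic field, so $\rho_{E_0,p}$ is irreducible for $p$ larger than a constant depending only on $K$. Combining, for $p$ greater than the maximum of $4d$ and of the two constants furnished by Theorems~\ref{T:th10} and~\ref{T:th12}, all of which depend only on $K$, the four hypotheses of Theorem~\ref{T:th14} hold simultaneously, which is the assertion.

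The only substantive ingredients here are Theorems~\ref{T:th10} and~\ref{T:th12}; the remaining verifications are purely numerical, so in fact there is no real obstacle within this proposition itself, the difficulty being already packaged in those two theorems. I would, however, point out that the constant obtained in this way is not effective in general, precisely because the modularity statement of Theorem~\ref{T:th10} is not effective; this is the reason the effective results later in the paper will require the additional input of Theorems~\ref{T:th11} and~\ref{T:th13}.
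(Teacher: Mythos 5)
Your proof is correct and follows essentially the same route as the paper: conditions (1) and (2) are elementary for large $p$, and conditions (3) and (4) are exactly Theorems~\ref{T:th10} and~\ref{T:th12}. The only (harmless) difference is in condition (2): the paper simply observes that it holds whenever $p$ is unramified in $K$ (excluding the finitely many $p\mid D_K$), whereas you derive it from the degree bounds $e\leq d$ and $(p-1)/2\mid d$, which gives the slightly more explicit statement that $p>4d$ already suffices for (1) and (2); your closing remark on the non-effectivity coming from Theorem~\ref{T:th10} matches the paper's discussion as well.
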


\begin{proof} 
La seconde condition est  r\'ealis\'ee si $p$ est non ramifi\'e dans $K$. Les th\'eor\`emes \ref{T:th10} et \ref{T:th12}  entra\^inent alors le r\'esultat.
\end{proof}

Les repr\'esentations $\rho_{E_0,p}$ et $\rho_{\ff,\pp}$ sont non ramifi\'ees en dehors de $\LL$ et des id\'eaux premiers de $O_K$ au-dessus de $p$.   Parce que $\rho_{E_0,p}$ est irr\'eductible, l'isomorphisme \eqref{(10.1)} se traduit par les conditions suivantes  : pour tout id\'eal premier $\fq$ de $O_K$,  distinct de $\LL$, qui n'est pas au-dessus de $p$,   on a 
\begin{equation}
 \label{(10.2)}
a_{\fq}(\ff)  \equiv a_{\fq}(E_0) \pmod \pp \quad \hbox{si}\quad E_0\ \hbox{a bonne r\'eduction en}\  \fq,
\end{equation}
\begin{equation}
 \label{(10.3)}
a_{\fq}(\ff)  \equiv \pm \left(\Norm(\fq)+1\right) \pmod \pp \quad \hbox{si}\quad  E_0\ \hbox{a  r\'eduction de type multiplicatif en}\  \fq.
\end{equation}
 
On en d\'eduit  l'\'enonc\'e ci-dessous permettant parfois d'obtenir une contradiction \`a l'existence de $(a,b,c)\in F_p(K)$ (cf. \cite{FS3}, lemme 7.1). 
Pour tout id\'eal premier $\fq$  de $O_K$, distinct de $\LL$, posons 
  \begin{equation}
   \label{(10.4)}
 A_{\fq}=\Big\lbrace t\in \Z\ \big| \ |t|\leq 2 \sqrt{\Norm(\fq)}\quad \hbox{et}\quad \ \Norm(\fq)+1\equiv t \pmod 4 \Big\rbrace,
  \end{equation}
    \begin{equation}
     \label{(10.5)}
B_{\ff,\fq}=\Norm(\fq)  \Bigl(\bigl(\Norm(\fq)+1\bigr)^2-a_{\fq}(\ff)^2\Bigr) \prod_{t\in A_{\fq}} \bigl(t-a_{\fq}(\ff)\bigr).
 \end{equation}
 
 \begin{proposition}  \label{P:prop5}  Supposons les quatre conditions du th\'eor\`eme~\ref{T:th14} satisfaites. Soient $\ff\in \calH$ et $\pp$ un id\'eal premier  de  $O_{\Q_{\ff}}$   au-dessus de $p$ tels que  $\rho_{E_0,p}\simeq \rho_{\ff,\pp}$. 
 Soit $\fq$ un id\'eal premier de $O_K$ distinct de $\LL$. Alors, $p$ divise la norme de $\Q_{\ff}$ sur $\Q$ de $B_{\ff,\fq}$.
\end{proposition}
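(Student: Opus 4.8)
Le plan est de montrer que l'id\'eal premier $\pp$ lui-m\^eme divise $B_{\ff,\fq}$ dans $O_{\Q_\ff}$ ; l'\'enonc\'e en r\'esultera aussit\^ot, car $B_{\ff,\fq}$ \'etant un entier alg\'ebrique et $\pp$ \'etant au-dessus de $p$, la divisibilit\'e $\pp\mid B_{\ff,\fq}$ entra\^ine que $p$ divise $\Norm_{\Q_\ff/\Q}(B_{\ff,\fq})$.

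On commencera par \'ecarter le cas o\`u $\fq$ est au-dessus de $p$ : alors $\Norm(\fq)$ est une puissance de $p$, et comme $\Norm(\fq)$ est l'un des facteurs de $B_{\ff,\fq}$ (cf. \eqref{(10.5)}), on obtient directement $p\mid B_{\ff,\fq}$, sans recourir aux congruences \eqref{(10.2)} et \eqref{(10.3)}, qui n'ont \'et\'e \'enonc\'ees que pour $\fq$ non au-dessus de $p$. On supposera donc d\'esormais $\fq$ non au-dessus de $p$. D'apr\`es le lemme~\ref{L:lemme4} (ou la proposition~\ref{P:prop3}), la courbe $E_0$ a en $\fq$ bonne r\'eduction ou r\'eduction de type multiplicatif, et l'on traitera ces deux cas s\'epar\'ement.

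Si $E_0$ a r\'eduction de type multiplicatif en $\fq$, la congruence \eqref{(10.3)} donne $a_\fq(\ff)\equiv\pm\bigl(\Norm(\fq)+1\bigr)\pmod\pp$, de sorte que $\pp$ divise $\bigl(\Norm(\fq)+1\bigr)^2-a_\fq(\ff)^2$, l'un des facteurs de $B_{\ff,\fq}$ ; d'o\`u $\pp\mid B_{\ff,\fq}$. Si $E_0$ a bonne r\'eduction en $\fq$, la congruence \eqref{(10.2)} donne $a_\fq(\ff)\equiv a_\fq(E_0)\pmod\pp$, o\`u $a_\fq(E_0)=\Norm(\fq)+1-\#E_0(\F_\fq)$ est la trace du Frobenius. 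Le point essentiel --- et le seul argument non formel --- consiste \`a v\'erifier que $a_\fq(E_0)$ appartient \`a l'ensemble $A_\fq$ de \eqref{(10.4)} : d'une part $|a_\fq(E_0)|\leq 2\sqrt{\Norm(\fq)}$ par l'in\'egalit\'e de Hasse ; d'autre part, les trois points d'ordre $2$ de $E_0$, \`a savoir $(0,0)$, $(a^p,0)$ et $(-b^p,0)$, \'etant rationnels sur $K$ et $\fq$ \'etant premier \`a $2$ puisque $\fq\neq\LL$, l'application de r\'eduction $E_0(K)[2]\to E_0(\F_\fq)$ est injective (\cite{Silverman}, p.~192, prop.~3.1), donc $4$ divise $\#E_0(\F_\fq)$ et $a_\fq(E_0)\equiv\Norm(\fq)+1\pmod 4$. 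Ainsi $a_\fq(E_0)$ est l'un des entiers $t$ parcourant $A_\fq$, et le facteur correspondant $t-a_\fq(\ff)\equiv a_\fq(E_0)-a_\fq(\ff)\equiv 0\pmod\pp$ figure dans le produit $\prod_{t\in A_\fq}\bigl(t-a_\fq(\ff)\bigr)$ intervenant dans \eqref{(10.5)} ; d'o\`u encore $\pp\mid B_{\ff,\fq}$.

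Je ne m'attends pas \`a un v\'eritable obstacle : tout d\'ecoule de l'isomorphisme d'abaissement du niveau du th\'eor\`eme~\ref{T:th14} et du comportement de r\'eduction de la courbe de Frey fourni par le lemme~\ref{L:lemme4}. Le seul point demandant un peu de soin est la gestion des id\'eaux premiers auxiliaires : traiter le cas $\fq\mid p$ gr\^ace au facteur $\Norm(\fq)$ plut\^ot que par les congruences, et exploiter la $2$-torsion compl\`ete de $E_0$ pour placer $a_\fq(E_0)$ dans $A_\fq$ dans le cas de bonne r\'eduction.
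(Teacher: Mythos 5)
Votre démonstration est correcte et suit essentiellement la même démarche que celle de l'article : traitement direct du cas $\fq\mid p$ via le facteur $\Norm(\fq)$, puis, pour $\fq\nmid p$, distinction entre bonne réduction (borne de Weil et rationalité des points d'ordre $2$ pour placer $a_{\fq}(E_0)$ dans $A_{\fq}$, puis congruence \eqref{(10.2)}) et réduction multiplicative (congruence \eqref{(10.3)} et le facteur $\bigl(\Norm(\fq)+1\bigr)^2-a_{\fq}(\ff)^2$). Le passage de la divisibilité par $\pp$ à celle de la norme par $p$ est correctement justifié.
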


\begin{proof}  Si $\fq$ divise $p$, alors $p$ divise $\Norm(\fq)$, en particulier $p$ divise la norme de $\Q_{\ff}$ sur $\Q$ de $B_{\ff,\fq}$. Supposons que $\fq$ ne divise pas $p$. La courbe elliptique  $E_0$ a bonne r\'eduction ou r\'eduction de type multiplicatif en $\fq$. 

Supposons que $E_0$ ait bonne r\'eduction en $\fq$. Parce que $E_0$ a tous ses points d'ordre $2$ rationnels que $K$ et que $\fq$ est distinct de $\LL$, le nombre de points de la courbe elliptique d\'eduite de $E_0$ par r\'eduction est multiple de $4$. Par ailleurs, on a $|a_{\fq}(E_0)|\leq 2 \sqrt{\Norm(\fq)}$ (borne de Weil), donc 
$a_{\fq}(E_0)$ appartient \`a $A_{\fq}$. La condition \eqref{(10.2)} implique alors notre assertion dans ce cas.

Si $E_0$ a   r\'eduction de type multiplicatif en $\fq$, la condition \eqref{(10.3)} est satisfaite, d'o\`u le r\'esultat.
\end{proof}

\section{D\'emonstration du th\'eor\`eme \ref{T:th4}}
Compte tenu des propositions \ref{P:prop4} et \ref{P:prop5}, le th\'eor\`eme \ref{T:th4} r\'esulte   de l'\'enonc\'e suivant :

\begin{proposition} \label{P:prop6} Pour tout $\ff\in \calH$, il existe  id\'eal premier  $\fq$  de $O_K$, distinct de $\LL$, tel que l'on ait  $B_{\ff,\fq}\neq 0$.
\end{proposition}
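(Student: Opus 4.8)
The plan is to argue by contradiction. Fix $\ff\in\calH$ and suppose that $B_{\ff,\fq}=0$ for \emph{every} prime ideal $\fq\neq\LL$ of $O_K$. Since $B_{\ff,\fq}=\Norm(\fq)\bigl((\Norm(\fq)+1)^2-a_{\fq}(\ff)^2\bigr)\prod_{t\in A_{\fq}}\bigl(t-a_{\fq}(\ff)\bigr)$, since $\Norm(\fq)\neq 0$, and since $\Q_{\ff}$ is a field, each such $\fq$ must satisfy $a_{\fq}(\ff)=\pm(\Norm(\fq)+1)$ or $a_{\fq}(\ff)\in A_{\fq}$. I will use two elementary facts: $2$ being totally ramified in $K$, $\LL$ is the unique prime above $2$, so $\Norm(\fq)$ is odd for every prime $\fq\neq\LL$; and each $a_{\fq}(\ff)$ is an algebraic integer.

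The first step is to show that, for every prime $\fq\neq\LL$, the coefficient $a_{\fq}(\ff)$ is a \emph{rational} integer with $a_{\fq}(\ff)\equiv\Norm(\fq)+1\pmod 4$. Indeed $A_{\fq}\subseteq\Z$ and $\pm(\Norm(\fq)+1)\in\Z$, so in either alternative $a_{\fq}(\ff)\in\Z$. If $a_{\fq}(\ff)\in A_{\fq}$, the congruence holds by definition of $A_{\fq}$. If $a_{\fq}(\ff)=\Norm(\fq)+1$ it is trivial, and if $a_{\fq}(\ff)=-(\Norm(\fq)+1)$ it holds because $\Norm(\fq)$ is odd, so $\Norm(\fq)+1$ is even and hence $-(\Norm(\fq)+1)\equiv\Norm(\fq)+1\pmod 4$.

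Then I would conclude as follows. The level of $\ff$ being exactly $\LL$ and the weight parallel $2$, the coefficient $a_{\LL}(\ff)$ equals $\pm 1$ (the local component of $\ff$ at $\LL$ is Steinberg up to an unramified quadratic twist); equivalently, $\Q_{\ff}$ is generated over $\Q$ by the $a_{\fq}(\ff)$ with $\fq\neq\LL$ (see \cite{Dembelevoight}). By the first step those generators all lie in $\Z$, hence $\Q_{\ff}=\Q$. But then condition {\bf{(C)}}, which is in force since it is among the hypotheses of Theorem~\ref{T:th4}, the theorem this proposition serves to prove, yields a prime $\fq\neq\LL$ with $a_{\fq}(\ff)\not\equiv\Norm(\fq)+1\pmod 4$, contradicting the first step. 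This is the desired contradiction.

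The argument is essentially bookkeeping with the definitions of $A_{\fq}$ and $B_{\ff,\fq}$; I do not expect a genuine obstacle. The only external input is the rationality $a_{\LL}(\ff)\in\Q$, needed so that $\Q_{\ff}=\Q$ follows once the coefficients away from $\LL$ are known to be rational; and the one place where total ramification of $2$ is really used is the reduction $-(\Norm(\fq)+1)\equiv\Norm(\fq)+1\pmod 4$, valid because $\Norm(\fq)$ is then odd for every $\fq\neq\LL$. Note that the hypothesis $h_K^+=1$ is not needed for this proposition.
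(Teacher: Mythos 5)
Votre preuve est correcte et suit essentiellement la même démarche que celle du texte : le point clé, identique dans les deux cas, est que la condition {\bf{(C)}} jointe à l'imparité de $\Norm(\fq)$ pour $\fq\neq\LL$ (d'où $-(\Norm(\fq)+1)\equiv \Norm(\fq)+1 \pmod 4$) exclut simultanément $a_{\fq}(\ff)\in A_{\fq}$ et $a_{\fq}(\ff)=\pm(\Norm(\fq)+1)$. La seule différence est de présentation : le texte distingue les cas $\Q_{\ff}\neq\Q$ (en citant Cremona--Demb\'el\'e, th. 9, pour produire $\fq\neq\LL$ avec $a_{\fq}(\ff)\notin\Z$) et $\Q_{\ff}=\Q$, tandis que vous raisonnez par l'absurde et utilisez $a_{\LL}(\ff)=\pm 1$ (composante de Steinberg en $\LL$) pour déduire $\Q_{\ff}=\Q$ de la rationalité des coefficients hors de $\LL$ --- ce qui revient au même fait sur le corps des coefficients.
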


\begin{proof}
Soit $\ff$ un \'el\'ement de $\calH$.

Supposons $\Q_{\ff}\neq \Q$. Il existe alors un id\'eal premier  $\fq$ de $O_K$, distinct de $\LL$,  tel que $a_{\fq}(\ff)$ ne soit pas dans $\Z$ (cf. \cite{Dembelecremona}, th. 9), d'o\`u $B_{\ff,\fq}\neq 0$.

 Supposons $\Q_{\ff}=\Q$. D'apr\`es la condition ${\bf{(C)}}$, il existe un id\'eal premier  $\fq$ de $O_K$, distinct de $ \LL$,   tel que l'on ait
 $a_{\fq}(\ff)\not\equiv \Norm(\fq)+1 \pmod 4$. En particulier,   $a_{\fq}(\ff)$ n'est  pas dans $A_{\fq}$. De plus, on a   $\bigl(\Norm(\fq)+1\bigr)^2\neq a_{\fq}(\ff)^2$ : dans le cas contraire, on aurait $a_{\fq}(\ff)=-(\Norm(\fq)+1)$. Or $\fq$ \'etant distinct de $\LL$, on a $2(\Norm(\fq)+1)\equiv 0 \pmod 4$, ce qui conduit \`a une contradiction.
Par suite, on a $B_{\ff,\fq}\neq 0$, d'o\`u l'assertion.
\end{proof}

\bigskip

{\large{{\bf{Partie 4. Les th\'eor\`emes 6, 7, 8  et 9}}}}

Dans toute cette partie, on suppose qu'il existe un point $(a,b,c)\in F_p(K)$ tel que $abc\neq 0$. Rappelons que pour $p>4d$, on suppose implicitement  qu'il est  normalis\'e  comme indiqu\' e dans l'\'enonc\'e de la proposition \ref{P:prop3}.

\section{Sur l'irr\'eductibilit\'e de $\rho_{E_0,p}$} 

Dans le cas o\`u $p$ est ramifi\'e dans $K$, le th\'eor\`eme~\ref{T:th13} ne permet pas  d'\'etablir que la repr\'esentation  $\rho_{E_0,p}$ est irr\'eductible (si tel est le cas). 
On dispose n\'eanmoins du r\'esultat suivant permettant parfois de conclure, qui vaut  sans hypoth\`ese de ramification en $p$. Pour tout cycle $\mm$ of $K$, notons $K^{\mm}$ le corps de classes de rayon  modulo  $\mm$ sur $K$.

\begin{lemme} \label{L:lemme6} Soit  $\pp$ un id\'eal premier de $O_K$ au-dessus de  $p$.  Notons $\mm_{\infty}$ le produit des places archim\'ediennes de $K$.
Soit 
 $\varphi : \GalK \to \F_p^*$ un caract\`ere non ramifi\'e  en dehors  $\mm_{\infty}\pp$.  Alors,  le corps laiss\'e fixe par le noyau de  $\varphi$ est contenu
dans $K^{\mm_{\infty}\pp}$.
\end{lemme}

\begin{proof} Soit  $n\geq 1$ un entier.  Il suffit de montrer que 
\begin{equation}
\label{(12.1)}
\Gal\bigl(K^{\mm_{\infty} \pp^{n}}/K^{\mm_{\infty} \pp}\bigr) \quad \hbox{est un }\ p\hbox{-groupe}.
\end{equation}
En effet, d'apr\`es l'hypoth\`ese faite,  il existe   $j\geq 1$ tel que le  corps laiss\'e fixe par le noyau de  $\varphi$ soit contenu dans 
$K^{\mm_{\infty} \pp^j}$. D'apr\`es l'assertion \eqref{(12.1)}, le groupe  $\Gal\bigl(K^{\mm_{\infty} \pp^j}/K^{\mm_{\infty} \pp}\bigr)$
est contenu dans le noyau  $\varphi$, ce qui implique alors le r\'esultat.

D\'emontrons    \eqref{(12.1)}. Posons 
$\mm=m_{\infty}\pp^{n+1}$ et   $\nn=\mm_{\infty}\pp^n$. Notons
$U_{\mm,1}$  le groupe des unit\'es de $O_K$ congrues \`a $1$ modulo $\mm$ et 
$U_{\nn,1}$ l'analogue de $U_{\mm,1}$ en ce qui concerne le cycle  $\nn$. Le corollaire  3.2.4 of \cite{Cohen} entra\^ine l'\' egalit\'e 
\begin{equation}
\label{(12.2)}
[K^{\mm}:K^{\nn}](U_{\nn,1} :U_{\mm,1})=\Norm(\pp).
\end{equation}
Par ailleurs, pour tout $x\in U_{\nn,1}$, on a  $x^p\in U_{\mm,1}$. 
 Ainsi, $U_{\nn,1}/U_{\mm,1}$ est un  $p$-groupe. D'apr\`es  l'\'egalit\'e  \eqref{(12.2)}, 
$[K^{\mm}:K^{\nn}]$  est donc une puissance de  $p$, ce qui entra\^ine l'assertion \eqref{(12.1)}.
\end{proof}

 On utilisera   ce r\'esultat de la fa\c con suivante. Supposons $p>4d$ et $\rho_{E_0,p}$  r\'eductible. Soient $\varphi$ et $\varphi'$ ses caract\`eres d'isog\'enie. Ils sont non ramifi\'es en dehors de $\mm_{\infty}$ et des id\'eaux premiers de $O_K$ au-dessus de $p$. Supposons  qu'il existe un id\'eal premier $\pp$ de $O_K$ au-dessus de $p$ tel que 
 $\varphi$ ou  $\varphi'$   soit non ramifi\'e en dehors de  $\mm_{\infty}\pp$  et  que de plus on ait $[K^{\mm_{\infty}\pp}:K]\leq 2$. On d\'eduit alors  du lemme~\ref{L:lemme6},  l'existence d'une courbe elliptique sur $K$ ayant un point d'ordre $p$ rationnel sur $K$, ce qui,  si $p$ est assez grand par rapport \`a $d$,  conduit   \`a une contradiction.

\section{Corps cubiques et modularit\'e}

On  va d\'emontrer  ici un crit\`ere permettant parfois de d'\'etablir  que toute courbe elliptique semi-stable d\'efinie sur un corps cubique r\'eel  est modulaire.
Rappelons que l'entier $R_K$ est d\'efini par la seconde formule de \eqref{(9.4)}. 

\begin{theoreme} \label{T:th15} Soit $K$ un corps cubique  r\'eel satisfaisant les   conditions suivantes :

\begin{itemize}

\item[1)]  On a $h_K^+=1$.

\item[2)] $5$ et $7$ ne divisent pas $D_KR_K$. 

\item[3)] $3$ n'est pas inerte dans $K$.

\end{itemize}

Alors, toute courbe elliptique semi-stable d\'efinie sur $K$ est modulaire.

\end{theoreme}

\subsection{Courbes elliptiques et points de $35$-torsion}
Commen\c cons par \'etablir  l'\'enonc\'e  qui suit, qui est une cons\'equence d'un r\'esultat de Bruin et Najman (\cite{BruiniNajman}).

\begin{proposition}  \label{P:prop7} Soit $K$ un corps cubique tel que $3$ ne soit pas inerte dans $K$. Alors, il n'existe pas  de courbes elliptiques  d\'efinies sur $K$ ayant  un point d'ordre $35$ rationnel sur $K$.
\end{proposition}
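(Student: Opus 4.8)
The plan is to reduce the statement to a finiteness result about modular curves. Recall that an elliptic curve over a number field $K$ has a point of order $35$ rational over $K$ if and only if $K$ admits a non-cuspidal point on the modular curve $X_1(35)$. So the proposition asserts that $X_1(35)(K)$ consists only of cusps whenever $K$ is a cubic field in which $3$ is not inert. The curve $X_1(35)$ has genus $25$, so over $\Q$ its set of rational points is finite by Faltings; but here we need to control cubic points, i.e. points of degree $3$, which is precisely the kind of question addressed by Bruin and Najman in \cite{BruiniNajman}.

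First I would invoke the classification in \cite{BruiniNajman} of the cubic points on $X_1(N)$: they determine, for each relevant $N$, all number fields $K$ of degree $\leq 3$ and all elliptic curves $E/K$ with a point of order $N$. For $N=35$ the outcome should be that the only cubic points on $X_1(35)$ are cusps, or else that the finitely many exceptional non-cuspidal cubic points are accounted for by a single explicit curve. If such exceptional points exist, the second step is to examine the cubic fields $K$ over which they are defined and check the behaviour of the prime $3$. The point of the hypothesis ``$3$ is not inert in $K$'' is exactly to exclude those fields: one computes the factorisation type of $3$ in the residue field (or in $\Q(j)$, $j$ the $j$-invariant of the exceptional curve) and verifies that in every case $3$ is inert, so that the hypothesis rules them out.

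The key steps, in order, are: (i) translate ``$E/K$ has a $K$-rational point of order $35$'' into ``$K$ is the field of definition of a non-cuspidal point on $X_1(35)$''; (ii) quote the Bruin--Najman determination of all such points of degree at most $3$; (iii) for each exceptional cubic point (if any), identify the cubic field of definition and compute the splitting of $3$ in it; (iv) conclude that the hypothesis ``$3$ not inert'' excludes precisely the exceptional cases, leaving only the cusps, which do not give elliptic curves.

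The main obstacle I anticipate is step (ii)--(iii): one must extract from \cite{BruiniNajman} the precise list of cubic points on $X_1(35)$ (as opposed to a mere finiteness statement) and then do the arithmetic bookkeeping on the cubic fields that occur, in particular verifying that $3$ is inert in each of them. This is essentially a matter of carefully reading off and interpreting the tables of \cite{BruiniNajman}; there is no deep new input required, but the correspondence between the hypothesis on $3$ and the list of exceptional fields must be matched exactly, and it is conceivable that $X_1(35)$ has no cubic points other than cusps at all, in which case the hypothesis on $3$ is not even needed for this particular proposition (though it will be needed elsewhere in the proof of Theorem~\ref{T:th15}).
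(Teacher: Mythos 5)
There is a genuine gap at your step (ii), which is where you place the entire content of the proof. The reference \cite{BruiniNajman} does not contain a determination (or classification) of the cubic points of $X_1(35)$; it provides a \emph{criterion} (their Theorem 1) to rule out a given torsion group over number fields of given degree, and the actual work of the proof consists in verifying its hypotheses. That is what the paper does: one applies the criterion with $A=\Z/35\Z$, $n=35$, $X=X'=X_1(35)$ and the prime $p=\pp_0=3$, and one checks the required conditions, namely that the gonality of $X_1(35)$ is $12$ and its Jacobian has rank $0$ over $\Q$ (\cite{Derickx}), that the fields of rationality of the cusps are $\Q$, $\Q(\mu_5)$, $\Q(\mu_7)$ and $\Q(\mu_{35})^+$ and that $3$ is inert in each of them, and that no elliptic curve over a residue field $k(\pp)$ with $\pp\mid 3$ in $O_K$ can have a rational point of order $35$. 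Your proposal defers all of this to a table of exceptional cubic points that the cited paper does not provide, so as written it does not constitute a proof; you would either have to carry out the verification of the Bruin--Najman criterion yourself, or invoke a genuine classification of torsion over cubic fields, which is not what is cited.

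Relatedly, your reading of the hypothesis ``$3$ is not inert in $K$'' is off. It is not there to exclude finitely many exceptional cubic fields read off a list; it enters the verification of the criterion: it guarantees that the residue degrees of the primes of $O_K$ above $3$ lie in $\lbrace 1,2\rbrace$, so the residue fields are $\F_3$ or $\F_9$, where the Hasse--Weil bound $\#E(\F_q)\leq q+1+2\sqrt{q}\leq 16<35$ forbids a point of order $35$ in the reduction. Over $\F_{27}$ such curves do exist (\cite{Waterhouse}, th.\ 4.1), which is exactly why the method breaks down, and why the hypothesis is imposed, when $3$ is inert. So the hypothesis is structural to this proof, not a bookkeeping device for hypothetical exceptional points; your closing speculation that it might be superfluous is not something the cited criterion can decide.
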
 

\begin{proof} On utilise le th\'eor\`eme 1 de \cite{BruiniNajman}, ainsi que  la remarque 4  \`a la fin du paragraphe 2  de {\it{loc. cit.}}  qui est tr\`es utile dans son 
application.
Avec  les notations de ce th\'eor\`eme, on prend
$$A=\Z/35\Z,\quad L=\Q,\quad m=1,    \quad n=35,\quad X=X'=X_1(35),\quad  \pi=\hbox{id} \quad \hbox{et}\quad p=\pp_0=3.$$
Il s'agit de v\'erifier que les six conditions i)-vi) de cet \'enonc\' e sont satisfaites.
Parce que $3$ ne divise pas $n$, on a $A'=\Z/35\Z$ et $h=1$. Pour toute pointe $Z$ de $X_1(35)$, l'ensemble $L(Z)$ est le corps de rationalit\'e   de $Z$. C'est donc l'un des corps 
$$\Q,\quad \Q(\mu_5),\quad \Q(\mu_7)\quad \hbox{et}\quad \Q(\mu_{35})^+.$$
Par hypoth\`ese,  $3$ n'est pas inerte dans $K$, on a donc
$$S_{K,\pp_0}=\left\lbrace 1,2\right\rbrace.$$
La gonalit\'e de $X_1(35)$ vaut $12$  et sa Jacobienne est de rang $0$ sur $\Q$ (\cite{Derickx}, p. 19 et lemme 1, p. 30). Pour tout id\'eal premier $\pp$ de $O_K$ au-dessus de $3$, 
il n'existe pas de courbes elliptiques d\'efinies sur $k(\pp)$ ayant un point rationnel d'ordre $35$.  Par ailleurs, $3$ est inerte dans 
$\Q(\mu_5)$, $ \Q(\mu_7)$ et $\Q(\mu_{35})^+.$ Les six conditions consid\'er\'ees sont donc satisfaites, d'o\`u le r\'esultat.
\end{proof}

\begin{remark}
Il existe des courbes elliptiques d\'efinies sur $\F_{27}$ ayant un point rationnel d'ordre $35$, ce qui explique l'hypoth\`ese que $3$ n'est pas inerte dans $K$ dans l'\'enonc\'e de la proposition (cf. \cite{Waterhouse}, th. 4.1).
 \end{remark}

\subsection{D\'emonstration du th\'eor\`eme 15}  Soit $E/K$ une courbe elliptique semi-stable sur $K$. On utilise le th\'eor\`eme~\ref{T:th11}. Par hypoth\`ese, 
 $5$ et $7$ sont  non ramifi\'es dans $K$. Il s'agit ainsi de montrer que l'une au moins des repr\'esentations $\rho_{E,5}$ et $\rho_{E,7}$ est irr\'eductible. 
Supposons le contraire i.e. que $\rho_{E,5}$ et $\rho_{E,7}$ soient r\'eductibles. 
Parce que l'on a $h_K^+=1$ et que $5$ et $7$ ne divisent  pas $D_KR_K$, quitte \`a remplacer $E$ par une courbe elliptique sur $K$ qui lui est li\'ee   par une $K$-isog\'enie de degr\'e $1$, $5$, $7$ ou  $35$,  on peut supposer que $E$ a un point d'ordre $5$ et un point d'ordre $7$ rationnels sur $K$ (th.~\ref{T:th13}). Elle poss\`ede donc un point d'ordre $35$ rationnel sur $K$, ce qui conduit \`a une contradiction (prop.~\ref{P:prop7}),  d'o\`u  le r\'esultat.

\section{Corps cubiques et irr\'eductibilit\'e de $\rho_{E_0,13}$}

On utilisera dans la d\'emonstration du th\'eor\`eme~\ref{T:th6}  le r\'esultat suivant.

\begin{theoreme} \label{T:th16} Soit $K$ un corps cubique  satisfaisant les   conditions suivantes :

\begin{itemize}

\item[1)] On a $h_K^+=1$.

\item[2)]  $13$ ne divise pas $D_KR_K$. 

\item[3)] $3$ n'est pas inerte dans $K$. 

\end{itemize}

Alors, pour toute courbe elliptique semi-stable $E/K$, ayant un point d'ordre $2$ rationnel sur $K$, la repr\'esentation $\rho_{E,13}$ est irr\'eductible.
\end{theoreme}

\begin{proof} Elle est analogue \`a celle du th\' eor\`eme~\ref{T:th15}. Soit $E/K$ une courbe elliptique semi-stable ayant un point d'ordre $2$ rationnel sur $K$. 
Supposons  $\rho_{E,13}$ r\'eductible. Parce que $h_K^+=1$ et que $13$ ne divise pas $D_KR_K$, la courbe elliptique $E$, ou bien une courbe elliptique sur $K$ li\'ee \`a $E$ par une $K$-isog\' enie de degr\' e $13$, poss\`ede un point d'ordre $13$ rationnel $K$ (th. \ref{T:th13}). Il  existe donc  une courbe elliptique sur $K$ ayant  un point d'ordre $26$ rationnel sur $K$. 

Avec les notations du  th\'eor\`eme 1 de \cite{BruiniNajman}, on prend $A=\Z/26\Z$, 
$m=1$, $n=26$, $\pp_0=3$, 
$X=X'=X_1(26)$ et $\pi$ est l'identit\'e de $X$. Parce que $3$ ne divise pas $n$, on a $A'=\Z/26\Z$ et $h=1$. Le corps de rationalit\'e des pointes de $X_1(26)$ est $\Q$ ou $\Q(\mu_{13})^+$. 
Par hypoth\`ese,    $3$ n'est pas inerte dans $K$, donc on a  
$S_{K,\pp_0}=\left\lbrace 1,2\right\rbrace$. La gonalit\'e de $X_1(26)$ vaut $6$ et sa Jacobienne est de rang $0$ sur $\Q$ (\cite{Derickx}, p. 19 et lemme 1, p. 30). Pour tout id\'eal premier $\pp$ de $O_K$ au-dessus de $3$, 
il n'existe pas de courbes elliptiques d\'efinies sur $k(\pp)$ ayant un point rationnel d'ordre $26$.  Par ailleurs, dans l'anneau d'entiers de 
 $\Q(\mu_{13})^+$, l'id\'eal engendr\'e par $3$ est le produit de deux  id\'eaux premiers de degr\'e 3, et $3$ n'est pas dans $S_{K,\pp_0}$.   Le th\'eor\`eme 1 de \cite{BruiniNajman} entra\^ine  alors contradiction et le r\'esultat.
\end{proof}

\section{D\'emonstration du  th\'eor\`eme~\ref{T:th6}}

Gross et Rohrlich ont d\'emontr\'e que l'ensemble des points rationnels de  $F_7$ et $F_{11}$ 
sur tout corps cubique est   trivial (\cite{GR}, th. 5.1). Par ailleurs, Klassen et Tzermias ont \'etabli
qu'il en est de m\^eme pour $F_5$ (\cite{KlassenTzermias}, th. 1).
On supposera donc que l'on a
$$p\geq 13.$$
En particulier, l'in\'egalit\'e $p>4d$ est satisfaite. De plus,  on a $p_0=13$  (\cite{Parent}).

\subsection{Cas o\`u $D_K=148$}

On a (\cite{Voight})
$$K=\Q(\alpha)\quad \hbox{o\`u}\quad \alpha^3-\alpha^2-3\alpha+1=0.$$
Le nombre premier  $3$ \'etant inerte dans $K$,   les th\'eor\`emes~\ref{T:th15} et~\ref{T:th16} ne s'appliquent  pas.

\begin{lemme}  \label{L:lemme7}  La courbe elliptique $E_0/K$ est modulaire. 
\end{lemme} 

\begin{proof}
On utilise   le th\'eor\`eme 11 avec $\ell=5$, qui ne divise pas $D_K$.  Supposons que $E_0$ poss\`ede un sous-groupe d'ordre $5$ stable par $\Gal(\overline{K}/K)$.   Parce que $E_0$ a  tous ses points d'ordre $2$ rationnels sur $K$, il en r\'esulte que $E_0$ est li\'ee par une $K$-isog\'enie de degr\'e au plus $2$ \`a une courbe elliptique sur $K$ ayant un sous-groupe cyclique d'ordre $20$ stable par $\Gal(\overline{K}/K)$ (cf. par exemple  \cite{AnniSiksek}, p. 1163). 
La courbe modulaire $X_0(20)$  est la courbe elliptique, de conducteur $20$, num\'erot\'ee 20A1 dans les tables de Cremona,  d'\'equation  (\cite{Cremona})
$$y^2=x^3+x^2+4x+4.$$
 Elle poss\`ede six pointes,  toutes rationnelles sur $\Q$. Le groupe $X_0(20)(\Q)$ est d'ordre $6$, et on v\'erifie avec {\tt Magma} qu'il en est de m\^eme du groupe  $X_0(20)(K)$. Cela montre que $Y_0(20)(K)$ est vide, d'o\`u une contradiction et notre assertion.
\end{proof}

\begin{lemme} \label{L:lemme8}   La repr\'esentation $\rho_{E_0,p}$ est irr\'eductible. 
\end{lemme} 

\begin{proof}
Posons  $u=\alpha^2$.  C'est une unit\'e totalement positive de $K$. Son polyn\^ome minimal est $H=X^3-7X^2+11X-1$. On a $H(1)=4$, donc $R_K$ divise $4$ (formules \eqref{(9.2)} et \eqref{(9.4)}). Par ailleurs, on a $D_K=4.37$. Cela entra\^ine le r\'esultat  si $p\neq 13, 37$ (th.~\ref{T:th13}).

Supposons $\rho_{E_0,13}$  r\'eductible.  Dans ce cas, $E_0$ est li\'ee par une $K$-isog\'enie de degr\'e au plus $2$ \`a une courbe elliptique sur $K$ ayant un sous-groupe cyclique d'ordre $52$ stable par $\Gal(\overline{K}/K)$.
Il existe un morphisme d\'efini sur $\Q$, de degr\'e  $3$, de la courbe modulaire  $X_0(52)$  sur  la courbe elliptique $F/\Q$, num\'erot\'ee 52A1 dans les tables de Cremona,  d'\'equation  (\cite{Cremona}, p. 363)
$$y^2=x^3+x-10.$$
La courbe $X_0(52)$ poss\`ede six pointes, toutes rationnelles sur $\Q$. Avec {\tt Magma}, on constate   que l'on a $F(K)=F(\Q)$, qui est d'ordre $2$.    On en d\'eduit que $Y_0(52)(K)$ est vide, d'o\`u une contradiction et le fait que  $\rho_{E_0,13}$ soit  irr\'eductible. 

Supposons $\rho_{E_0,37}$  r\'eductible. Soient $\varphi$ et $\varphi'$ ses caract\`eres d'isog\'enie. On a $37O_K=\pp_1^2\pp_2$,  o\`u $\pp_i$ est un id\'eal premier de $O_K$.  L'id\'eal  $\pp_2$  est non ramifi\'e. D'apr\`es l'hypoth\`ese faite sur $\rho_{E_0,37}$, si  $E_0$ a  bonne r\'eduction en $\pp_2$,  cette r\'eduction est n\' ecessairement de hauteur $1$ (cf. \cite{Se1}, prop. 12). On en d\'eduit que 
 l'un des caract\`eres $\varphi$ et $\varphi'$ est non ramifi\'e en $\pp_2$ ({\it{loc. cit.}}, cor. p. 274 et cor. p. 277). Quitte \`a remplacer $E_0$ par une courbe elliptique qui lui est li\'e par une $K$-isog\'enie de  degr\'e 37, on peut supposer que c'est $\varphi$. Par suite, $\varphi$ est non ramifi\'e en dehors  de $\pp_1$
et des places archim\'ediennes.
D'apr\`es le lemme \ref{L:lemme6}, le corps laiss\'e fixe par le noyau de $\varphi$ est donc contenu dans le corps de rayon $K^{\mm_{\infty} \pp_1}$. On v\'erifie  que l'on a 
$[K^{\mm_{\infty} \pp_1}:K]=2$ (\cite{Pari}). Ainsi, $\varphi$ est d'ordre au plus $2$. On a $\varphi\neq 1$, car $E_0$ n'a  pas de   point d'ordre $37$ rationnel sur $K$. Le caract\`ere  $\varphi$ est donc d'ordre $2$ et la courbe elliptique d\'eduite de $E_0$ par torsion quadratique  par $\varphi$ a donc un point d'ordre $37$ sur $K$, d'o\`u une contradiction et le r\'esultat.
\end{proof}

Les quatre    conditions du th\'eor\`eme d'abaissement de niveau sont  donc satisfaites. Par ailleurs, on a $|\calH|=0$ i.e. il n'existe pas de newforms modulaires paraboliques de Hilbert sur $K$ de poids parall\`ele $2$ et de niveau $\LL$ (\cite{lmfdb}). On obtient ainsi une contradiction \`a l'existence de $(a,b,c)$, d'o\`u le th\'eor\`eme  dans ce cas.

\subsection{Cas o\`u $D_K=404$}
On a  
$$K=\Q(\alpha)\quad \hbox{o\`u}\quad \alpha^3-\alpha^2-5\alpha-1=0.$$
On a  $3O_K=\wp_1\wp_2$, o\`u $\wp_1$ est un id\'eal premier de  degr\'e 1 et o\`u $\wp_2$ est de degr\'e 2. En particulier, $3$ n'est pas inerte dans $K$.

Le polyn\^ome minimal de $\alpha^2\in U_K^+$ est $H=X^3-11X^2+23X-1$ et on a $H(1)=12$. On a $D_K=2^2.101$, donc $5,7$ et $13$ ne divisent pas $D_KR_K$. 

Il r\'esulte alors du th\'eor\`eme~\ref{T:th15} que $E_0/K$ est modulaire. Pour $p\neq 101$, les  th\'eor\`emes~\ref{T:th13} et~\ref{T:th16} 
entra\^inent que $\rho_{E_0,p}$ est irrr\'eductible.  La d\'ecomposition de  $101O_K$ en produit d'id\'eaux premiers est de la forme $\pp_1^2\pp_2$ et on a $[K^{\mm_{\infty}\pp_1}:K]=2$. On en d\'eduit,  comme dans la d\'emonstration du lemme~\ref{L:lemme8},  que  $\rho_{E_0,101}$ est irrr\'eductible.

Par ailleurs, on a $|\calH|=1$ i.e. il existe une unique  newform modulaire parabolique  de Hilbert $\ff$ sur $K$ de poids parall\`ele $2$ et de niveau $\LL$ (\cite{lmfdb}). En particulier, on a $\Q_{\ff}=\Q$. 
Soit $\fq$ l'id\'eal premier de $O_K$ au-dessus de $7$ de degr\'e 1. On a $a_{\fq}(\ff)=-2$. D'apr\`es les \'egalit\'es \eqref{(10.4)} et \eqref{(10.5)}, on a
$$A_{\fq}=\left\lbrace-4,0,4 \right\rbrace\quad \hbox{et}\quad B_{\ff,\fq}=-2^5.3^2.5.7.$$
Ainsi, $B_{\ff,\fq}$ n'est pas divisible par $p$, d'o\`u la conclusion dans ce cas (prop.~\ref{P:prop5}). 

\subsection{Cas o\`u $D_K=564$}
On a  
$$K=\Q(\alpha)\quad \hbox{o\`u}\quad \alpha^3-\alpha^2-5\alpha+3=0.$$
On a  $3O_K=\wp_1^2\wp_2$, o\`u $\wp_i$ est un id\'eal premier de  degr\'e 1.

Le polyn\^ome minimal  de $(\alpha+2)^2\in U_K^+$  est $H=X^3-27X^2+135X-1$ et on a $H(1)=2^2.3^3$. 

On a $D_K=2^2.3.47$. On en d\'eduit  que $E_0/K$ est modulaire (th.~\ref{T:th15}) et que $\rho_{E_0,p}$ est irr\'eductible pour $p\neq 47$ (th. \ref{T:th13} et 
th. \ref{T:th16}). On  $47O_K=\pp_1^2\pp_2$ et  $[K^{\mm_{\infty}\pp_1}:K]=2$, il en est donc de m\^eme de $\rho_{E_0,47}$.

L'ensemble $\calH$, qui est de cardinal $2$,  est form\'e d'une newform $\ff$ et de sa conjugu\'ee galoisienne telle que  $\Q_{\ff}=\Q(\beta)$ o\`u $\beta^2+3\beta-1=0$ (\cite{lmfdb}). Soit $\fq$ l'id\'eal premier de $O_K$ au-dessus de $3$  tel que
$a_{\fq}(\ff)=\beta$. Il est de degr\'e $1$. On a $A_{\fq}=\left\lbrace 0 \right\rbrace$ et $B_{\ff,\fq}=-3\beta (16-\beta^2)$. Sa norme sur $\Q$ \' etant $-3^6$, on obtient  la conclusion cherch\'ee.

Cela termine la d\'emonstration du th\'eor\`eme~\ref{T:th6}.

\section{D\'emonstration du   th\'eor\`eme~\ref{T:th7}} 

L'ensemble des points rationnels de  $F_{11}$ 
sur tout corps de degr\'e 5 sur $\Q$ est  trivial (\cite{GR}, th. 5.1).  Tzermias a d\'emontr\'e 
qu'il en est de m\^eme pour $F_7$ (\cite{Tzermias}, th. 1).
On supposera donc que l'on a
$$p\geq 23.$$
En particulier, on a $p>4d$. Par ailleurs, on a $D_K=2^4.7877$. 

\begin{lemme}  \label{L:lemme9}  La courbe elliptique $E_0/K$ est modulaire. 
\end{lemme} 

\begin{proof}
On utilise   le th\'eor\`eme~\ref{T:th11} avec $\ell=7$. Supposons que $E_0$ poss\`ede un sous-groupe d'ordre $7$ stable par $\Gal(\overline{K}/K)$.   Dans ce cas, $E_0$ poss\`ede un sous-groupe cyclique d'ordre $14$ stable par $\Gal(\overline{K}/K)$.
La courbe modulaire $X_0(14)$  est la courbe elliptique, de conducteur $14$, num\'erot\'ee 14A1 dans les tables de Cremona,  d'\'equation  (\cite{Cremona})
$$y^2+xy+y=x^3+4x-6.$$
 Elle poss\`ede quatre  pointes,  qui sont  rationnelles sur $\Q$. On v\'erifie avec {\tt Magma} que  l'on a $X_0(14)(K)=X_0(14)(\Q)$ qui est d'ordre $6$.  Par ailleurs, les points non cuspidaux de $X_0(14)$ correspondent \`a deux classes d'isomorphisme de courbes elliptiques sur $\Q$ d'invariants modulaires  entiers ($-15^3$ et $255^3$). Parce que celui de $E_0$ n'est pas entier en $\LL$, on obtient une contradiction et le r\'esultat.
  \end{proof}

\begin{lemme}  \label{L:lemme10} La repr\'esentation $\rho_{E_0,p}$ est irr\'eductible. 
\end{lemme} 

\begin{proof} L'entier $\alpha$  \'etant d\'efini par l'\'egalite \eqref{(5.1)}, 
posons 
$$u_1=(\alpha-1)^2\quad \hbox{et}\quad u_2=(\alpha^2+\alpha-1)^2.$$ Ce sont des unit\'es totalement positives de $O_K$.  On v\'erifie que l'on a (formules \eqref{(9.2)} et \eqref{(9.3)})
$$H_1^{(u_1)}(1)=-12\quad \hbox{et}\quad \pgcd\left(H_2^{(u_1)}(1),H_2^{(u_2)}(1)\right)=2^{12}.3.5^2.$$
Il en r\'esulte que $R_K$ n'est pas divisible pas un nombre premier plus grand que $7$. On a $p_0=19$ (\cite{Derickx},  chap. III, th. 1.1),  d'o\`u l'assertion si $p\neq 7877$. Par ailleurs, on a $7877O_K=\pp_1^2\pp_2$, o\`u $\pp_1$ est un id\'eal premier de degr\'e $1$ et $\pp_2$  un id\'eal premier de degr\'e $3$. On v\'erifie     que l'on a  
$[K^{\mm_{\infty} \pp_1}:K]=2$, ce qui entra\^ine le r\'esultat pour $p=7877$.
\end{proof}

Par ailleurs, on a $|\calH|=2$. Plus pr\'ecis\'ement, $\calH$ est form\'e d'une newform $\ff$ et de sa conjugu\'ee galoisienne, et on a $\Q_{\ff}=\Q(\beta)$ o\`u $\beta^2+\beta-3=0$ (\cite{lmfdb}). Soit $\fq$ l'id\'eal premier de $O_K$ au-dessus de $3$  de degr\'e $1$. On a 
$a_{\fq}(\ff)=\beta$. D'apr\`es les \'egalit\'es \eqref{(10.4)} et \eqref{(10.5)}, on a
$A_{\fq}=\left\lbrace 0 \right\rbrace$ et $B_{\ff,\fq}=-3\beta(16-\beta^2).$
La norme sur $\Q$ de $B_{\ff,\fq}$ est $-3^5.17$, qui n'est pas divisible par $p$. La proposition~\ref{P:prop5} implique alors le  th\'eor\`eme.

\section{D\'emonstration du  th\'eor\`eme~\ref{T:th8}} 

On a $D_K=2^{11}.37^2$. On v\'erifie, comme dans la d\'emonstration du lemme~\ref{L:lemme9}, que l'on a $X_0(14)(\Q)=X_0(14)(K)$, d'o\`u l'on d\'eduit que $E_0/K$ est modulaire. 

D\'emontrons que $\rho_{E_0,p}$ est irr\'eductible. Par hypoth\`ese, on a $p>4d=24$ et $p\neq 37$. L'entier $\alpha$ \'etant d\'efini par l'\'egalit\'e \eqref{(5.2)}, on consid\`ere les unit\'es totalement positives
$$u_1=\frac{1}{58^2}\left(4\alpha^5 + 19\alpha^4 - 28\alpha^3 - 170\alpha^2 - 16\alpha + 41\right)^2,$$
$$u_2=\frac{1}{58^2} \left(14\alpha^5 + 23\alpha^4 - 156\alpha^3 - 160\alpha^2 + 176\alpha + 13\right)^2.$$
On v\'erifie que l'on a 
$$H_1^{(u_1)}(1)=16,\quad H_2^{(u_1)}(1)=2^{32}.5^4\quad \hbox{et}\quad   H_3^{(u_2)}(1)=2^{216}.7^{54}.$$
Par suite, $R_K$ n'est pas divisible par un nombre premier plus grand que $7$.
Il  n'existe pas de courbes elliptiques sur $K$ ayant un point d'ordre $p$ rationnel sur $K$ (\cite{Derickx}, chap. III, th. 1.1)), d'o\`u l'assertion.

Par ailleurs, on v\'erifie avec  {\tt Magma} que l'on a  $|\calH|=2$ et  que $\calH$ est form\'e d'une newform $\ff$ et de sa conjugu\'ee galoisienne, dont le corps de rationalit\'e est  $\Q_{\ff}=\Q(\beta)$ o\`u $\beta^2-\beta-21=0$. 
En utilisant la proposition~\ref{P:prop5}, on  conclut alors en consid\'erant  l'id\'eal premier $\fq_1$ de $O_K$ de degr\'e $1$ au-dessus de $17$,  tel que $a_{\fq_1}(\ff)=\beta$  et  l'id\'eal premier $\fq_2$ de degr\'e $1$ au-dessus de $23$,   tel que $a_{\fq_2}(\ff)=\beta-2$.

\section{D\'emonstration du  th\'eor\`eme~\ref{T:th9}} 

Posons $K_n=\Q\left(\mu_{2^{n+2}}\right)^+$.

\begin{lemme}   \label{L:lemme11} Toute  courbe elliptique d\'efinie sur $K_n$ est modulaire. 
\end{lemme} 

\begin{proof}
Le corps $K_n$ est le $n$-i\`eme \'etage de la $\Z_2$-extension cyclotomique de $\Q$, d'o\`u l'assertion (\cite{Thorne}, th. 1).
\end{proof}

 \subsection{L'assertion 1} Elle est d\'ej\`a  connue si $p=7$ (\cite{Tzermias}, th. 1) et si   $p=11$ (\cite{GR}). Le fait que $F_5(K_2)$ soit trivial est une cons\'equence directe du th\'eor\`eme 2 de 
\cite{Kraus2017}.   On supposera donc 
$$p\geq 13.$$

On est  amen\'e \`a distinguer deux cas suivant que $p=13$  ou $p\geq 17$ (car $4d=16$).

 \subsubsection{Cas o\`u $p\geq 17$} 

\begin{lemme}  \label{L:lemme12}  Pour tout $p\geq 17$,  la repr\'esentation $\rho_{E_0,p}$ est irr\'eductible.
\end{lemme} 

\begin{proof} On a $D_{K_2}=2^{11}$ et  
$K_2=\Q(\alpha)$ o\`u 
$$\alpha^4-4\alpha^2+2=0.$$
 Posons $u=(\alpha+1)^2$. On a $u\in U_{K_2}^+$. Son polyn\^ome minimal est 
$H_1^{(u)}=X^4-12X^3+34X^2-20X+1$ et on a $H_1^{(u)}(1)=4$. On v\'erifie que $H_2^{(u)}(1)=2^{16}.17$. Par ailleurs, on a  $p_0=17$ (\cite{Derickx}, th. 1.1), d'o\`u le r\'esultat si $p\neq 17$ (th.~\ref{T:th13}). 

 Supposons $p=17$.  La courbe modulaire $X_0(17)$   est $\Q$-isomorphe \`a la courbe elliptique de conducteur $17$, not\'ee $17A1$ dans les tables de Cremona,  d'\'equation
$$y^2+xy+y=x^3-x^2-x-14.$$
Elle poss\`ede deux pointes, qui sont rationnelles sur $\Q$. 
Avec {\tt Magma},  on constate que l'on a $X_0(17)(K)=X_0(17)(\Q)$, qui est  cyclique d'ordre $4$. Les points non cuspidaux de $X_0(17)(\Q)$ correspondent \`a deux classes d'isomorphisme de courbes elliptiques sur $\Q$ d'invariants modulaires (voir par exemple \cite{Dahmen}, p. 30, Table 2.1)
$$j_1=-{17.373^3\over 2^{17}}\quad \hbox{et}\quad j_2=-{17^2.101^3\over 2}.$$
Ils  sont distincts de $j(E_0)$. En effet, on a 
$$j(E_0)=2^8\biggl({(a^{34}+(ab)^{17}+b^{34})^3\over (abc)^{34}}\biggr),$$
d'o\`u $v_{\LL}(j(E_0))=32-34v_{\LL}(abc).$ On a $v_{\LL}(j_1)=-68$ et $v_{\LL}(j_2)=-4$, ce qui entra\^ine  l'assertion et  le lemme.
\end{proof}

 On constate dans les tables de  \cite{lmfdb} que l'on a $|\calH|=0$, d'o\`u le r\'esultat dans ce cas.

 \subsubsection{Cas o\`u $p=13$} 
 
A priori, on ne peut plus  normaliser $(a,b,c)\in F_{13}(K_2)$ de sorte que la courbe de Frey $E_0/K_2$ soit semi-stable. On va donc utiliser   le th\'eor\`eme d'abaissement du niveau dans le cas g\'en\'eral (\cite{FS1}, th. 7). 

Consid\'erons  un point  $(a,b,c)\in F_{13}(K_2)$ tel que $abc\neq 0$ et que $a,b,c$ soient premiers entre eux dans $O_{K_2}$. On a $v_{\LL}(abc)\geq 1$. Soit   $E_0/K_2$ la courbe elliptique  d'\'equation 
\begin{equation}
\label{(18.1)}
 y^2=x(x-a^{13})(x+b^{13}).
  \end{equation}
 Rappelons que l'on a
 \begin{equation}
 \label{(18.2)}
 c_4(E_0)=2^4\left(a^{26}+(ab)^{13}+b^{26}\right) \quad \hbox{et}\quad \Delta(E_0)=2^4(abc)^{26}.
 \end{equation}
 
 \begin{lemme}  \label{L:lemme13}  La  repr\'esentation $\rho_{E_0,13}$ est irr\'eductible.
\end{lemme} 
 
 \begin{proof} Elle est identique \`a celle du lemme~\ref{L:lemme8}  dans le cas o\`u $p=13$.  En effet,
$F/\Q$ \' etant  la courbe elliptique   num\'erot\'ee 52A1 dans les tables de Cremona, on a $F(K_2)=F(\Q)$, qui est d'ordre $2$. Cela entra\^ine que $Y_0(52)(K_2)$ est vide et le r\'esultat.
 \end{proof}
 
 Supposons $v_{\LL}(abc)\geq 2$. Dans ce cas, par les m\^emes arguments que ceux utilis\'es dans la d\'emonstration de la proposition \ref{P:prop3}, on peut encore normaliser $(a,b,c)$ de sorte que   $E_0$ ait r\'eduction de type multiplicatif en $\LL$ et   que $E_0$ soit semi-stable. On  peut alors conclure comme dans l'alin\'ea pr\'ec\'edent. 
  
Supposons donc d\'esormais que l'on a 
 \begin{equation}
 \label{(18.3)}
 v_{\LL}(abc)=1.
\end{equation}

  \begin{lemme}   \label{L:lemme14} Quitte \`a multiplier $(a,b,c)$ par une unit\'e convenable de $O_{K_2}$, on a 
$$v_{\LL}(N_{E_0})\in \lbrace 5,6,8\rbrace.$$
 \end{lemme} 
 
 \begin{proof} D'apr\`es \eqref{(18.2)} et \eqref{(18.3)}, on a 
 \begin{equation}
  \label{(18.4)}
 v_{\LL}\left(c_4(E_0)\right)=16,\quad  v_{\LL}\left(c_6(E_0)\right)=24,\quad  v_{\LL}\left(\Delta(E_0)\right)=42.
  \end{equation}
On a $v_{\LL}\left(j_{E_0}\right)=6$, donc $E_0$ a potentiellement bonne r\'eduction en $\LL$. 

On peut supposer que l'on a $v_{\LL}(b)=1$, auquel cas $v_{\LL}(ac)=0$. 
 Par ailleurs, il existe une unit\'e $\varepsilon$ de $O_{K_2}$ telle que l'on ait $\varepsilon a+1 \equiv 0 \pmod 4$ (Appendice, th.~\ref{T:th17} et lemme~\ref{L:lemme16}). En  rempla\c cant  $(a,b,c)$ par 
 $(\varepsilon a, \varepsilon b, \varepsilon c)$, on se ram\`ene au cas o\`u l'on a
  \begin{equation}
   \label{(18.5)}
a^{13}+1\equiv 0 \pmod 4.
   \end{equation}
  Le mod\`ele \eqref{(18.1)} n'est pas minimal en $\LL$. En effet, posons 
  $$x=\alpha^4X\quad \hbox{et}\quad y=\alpha^6Y+\alpha^4X.$$
 On obtient comme nouveau mod\`ele
$$(W) : Y^2+\frac{2}{\alpha^2}XY=X^3+\left(\frac{b^{13}-a^{13}-1}{\alpha^4}\right)X^2-\frac{(ab)^{13}}{\alpha^8} X.$$
D'apr\`es \eqref{(18.5)} et le fait que $2$ soit  associ\'e \`a $\alpha^4$, c'est un mod\`ele entier. On a
$$c_4(E_0)=\alpha^8 c_4(W),\quad  c_6(E_0)=\alpha^{12} c_6(W),\quad \Delta(E_0)=\alpha^{24}\Delta(W),$$
d'o\`u (formules \eqref{(18.4)})
$$v_{\LL}(c_4(W))=8, \quad v_{\LL}\left(c_6(W)\right)=12,\quad v_{\LL}(\Delta(W))=18.$$
On v\'erifie avec les tables de \cite{Papadopoulos} que le type de N\' eron en $\LL$ de $(W)$ est  I$_6^*$ ou bien que $(W)$ n'est pas minimal en $\LL$. Si le type de N\'eron est I$_6^*$, on a  $v_{\LL}(N_{E_0})=8$. Si $(W)$ n'est pas minimal, le triplet de valuations de ses invariants minimaux  en $\LL$ est   
 $(4,6,6)$. On constate alors que son  type de N\'eron est II, auquel cas 
$v_{\LL}(N_{E_0})=6$, ou bien que son   type de N\'eron est III et  on a  $v_{\LL}(N_{E_0})=5$, d'o\`u le lemme.
  \end{proof} 
 
 Supposons $(a,b,c)$ normalis\'e comme dans l'\'enonc\'e du  lemme pr\'ec\'edent. 
 Notons $\calS_2^+(\LL^r)$ le $\C$-espace vectoriel engendr\'e par les newforms modulaires paraboliques de Hilbert sur $K_2$, de poids parall\`ele $2$ et de niveau $\LL^r$. 
On a (\cite{lmfdb})
 $$\dim \calS_2^+(\LL^5)=1,\quad \dim \calS_2^+(\LL^6)=3\quad \hbox{et}\quad \dim \calS_2^+(\LL^8)=8.$$
Compte tenu des lemmes \ref{L:lemme11} et \ref{L:lemme13}, les conditions du th\'eor\`eme d'abaissement du niveau sont satisfaites (\cite{FS1}, th. 7). Il existe donc   $\ff\in \calS_2^+(\LL^r)$ avec $r\in \lbrace 5,6,8\rbrace$ et un id\'eal premier $\pp$ au-dessus de $13$ dans $O_{\Q_{\ff}}$,  tels que
$$\rho_{\ff,\pp}\simeq \rho_{E_0,13}.$$
Consid\'erons alors le nombre premier $79$. Il est totalement d\'ecompos\'e dans $K_2$. Soit $\fq$ un id\'eal premier de $O_{K_2}$ au-dessus de $79$. 
On constate que l'on a 
$$a_{\fq}(\ff)\in \ \lbrace -8,0,8 \rbrace.$$
Si $E_0$ a r\'eduction multiplicative en $\fq$, on a 
$$a_{\fq}(\ff)\equiv \pm 2 \pmod \pp,$$ ce qui n'est pas. Ainsi $E_0$ a bonne r\'eduction en $\fq$. On a donc la congruence
$$a_{\fq}(E_0)\equiv a_{\fq}(\ff) \pmod \pp.$$
On a $v_{\fq}(abc)=0$, donc $a^{13}, b^{13}, c^{13}$ sont des racines $6$-i\`emes de l'unit\' e modulo $\fq$. On  a ainsi
$$a^{13}, b^{13}, c^{13}\equiv 1, 23, 24, 55, 56, 78 \pmod \fq.$$
L'\'egalit\'e $a^{13}+b^{13}+c^{13}=0$ implique
$$(a^{13}, b^{13})\equiv (1,23), (1,55),(23,1),(23,55),(24,56),(24,78),(55,1),$$
$$(55,23),(56,24),(56,78), (78,24), (78,56)\pmod \fq.$$
Dans tous les cas, on obtient 
$$a_{\fq}(E_0)=\pm 4,$$ d'o\`u la contradiction cherch\'ee et le r\'esultat pour $p=13$.

\subsection{L'assertion 2}

 \begin{lemme}   \label{L:lemme15} Pour tout $p>6724$, la  repr\'esentation $\rho_{E_0,p}$ est irr\'eductible.
\end{lemme} 
 
\begin{proof} On a $D_{K_3}=2^{31}$ et $K_3=\Q(\alpha)$ o\`u $$\alpha^8- 8\alpha^6 + 20\alpha^4 - 16\alpha^2 + 2=0.$$  Posons 
$$u_1=(-\alpha^6 - 2\alpha^5 + 5\alpha^4 + 10\alpha^3 - 4\alpha^2 - 9\alpha - 1)^2,\quad
u_2=(\alpha^7 + \alpha^6 - 6\alpha^5 - 5\alpha^4 + 9\alpha^3 + 5\alpha^2 - 3\alpha - 1)^2,$$
$$u_3=(-2\alpha^7 - 2\alpha^6 + 11\alpha^5 + 10\alpha^4 - 13\alpha^3 - 9\alpha^2 + \alpha + 1)^2.$$
Ce sont des unit\'e totalement positives de $O_{K_3}$. En utilisant le th\'eor\`eme \ref{T:th13} avec les $u_i$, 
on  v\'erifie que   on a l'implication
$$R_{K_3}\equiv 0 \pmod p  \ \Rightarrow \ p\leq 607.$$
Par ailleurs, on a 
$p_0<6724$ (\cite{Oesterle}), d'o\`u l'assertion.
 \end{proof} 
 
 Les conditions du th\'eor\`eme \ref{T:th14}  sont satisfaites.
 On constate avec {\tt Magma} que  l'on a $|\calH|=40$. \`A conjugaison pr\`es,  $\calH$ est form\'e de quatre newforms $\ff$  telles que $[\Q_{\ff}:\Q]=4$ et d'une newform dont le corps de rationalit\'e est de degr\'e 24 sur $\Q$. 
 
 Les nombres premiers 31 et 97 sont totalement d\' ecompos\'es dans $K_3$. En utilisant la proposition 5, et  en prenant pour $\fq$ un id\'eal premier de $O_{K_3}$ au-dessus de $31$,  puis  un id\' eal premier au-dessus de $97$, on obtient alors le r\'esultat.
\bigskip

{\large{{\bf{Partie 5. Appendice}}}}

 Soit $K$ un corps de nombres totalement r\'eel.  
 Notons  $K^{4O_K}$ le corps de classes de rayon modulo $4O_K$ sur $K$.
On \'etablit ici  l'\'enonc\'e  suivant, que l'on utilise, tout au moins sa premi\`ere assertion, dans les d\'emonstrations du th\' eor\`eme~\ref{T:th5}, de la proposition~\ref{P:prop3}  et du lemme~\ref{L:lemme14}. 

\bigskip

\begin{theoreme} \label{T:th17}

1) Supposons $2$ totalement ramifi\'e dans $K$ et $h_K^+=1$. Alors,  on a $K=K^{4O_K}$.

2) Supposons  $K=K^{4O_K}$. Alors, on a $h_K^+=1$. 
 \end{theoreme}
 
 En particulier :
 
\begin{corollaire} \label{C:cor1} Supposons $2$ totalement ramifi\'e dans $K$. On a $h_K^+=1$ si et seulement si $K=K^{4O_K}$.
\end{corollaire}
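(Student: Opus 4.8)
L'approche que je suivrais consiste \`a reformuler l'\'egalit\'e $K=K^{4O_K}$ \`a l'aide du groupe de classes de rayon, puis \`a la traduire en un \'enonc\'e sur les extensions quadratiques de $K$. Je partirais du fait que $\Gal(K^{4O_K}/K)$ est canoniquement isomorphe au groupe de classes de rayon $\Cl_{4O_K}(K)$, qui s'ins\`ere dans la suite exacte
\[
U_K \To (O_K/4O_K)^* \To \Cl_{4O_K}(K) \To \Cl(K) \To 0,
\]
la premi\`ere fl\`eche \'etant la r\'eduction modulo $4O_K$ ; ainsi $K=K^{4O_K}$ si et seulement si $h_K=1$ et cette r\'eduction est surjective. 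J'utiliserais aussi le fait classique : pour $K$ totalement r\'eel, $h_K^+=1$ \'equivaut \`a ce que $h_K=1$ et que toute unit\'e totalement positive de $O_K$ soit un carr\'e dans $K$ (cela r\'esulte de la suite exacte analogue $U_K\To\{\pm1\}^d\To\Cl_K^+\To\Cl_K\To0$ et de $[U_K:U_K^2]=2^d$, qui donnent $h_K^+=h_K\,[U_K^+:U_K^2]$). Le lien entre les deux mondes est la remarque suivante, que je v\'erifierais en premier : si $\eta\in U_K$ est totalement positive, alors $K(\sqrt\eta)/K$ est non ramifi\'ee en toute place r\'eelle (car $\eta$ y est positive) et en tout id\'eal premier ne divisant pas $2$ (car $\eta$ est une unit\'e), et, en un id\'eal premier $\pp\mid 2$, son conducteur divise le discriminant de l'ordre $O_{K_\pp}[\sqrt\eta]$, \`a savoir $4\eta O_{K_\pp}=4O_{K_\pp}$ ; le conducteur de $K(\sqrt\eta)/K$ divise donc $4O_K$, c'est-\`a-dire $K(\sqrt\eta)\subseteq K^{4O_K}$.

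Pour l'assertion 2), je supposerais $K=K^{4O_K}$. Alors $\Cl_K$, quotient de $\Cl_{4O_K}(K)=0$, est trivial, donc $h_K=1$ ; et pour toute unit\'e totalement positive $\eta$, la remarque pr\'ec\'edente donne $K(\sqrt\eta)\subseteq K^{4O_K}=K$, donc $\eta$ est un carr\'e dans $K$, donc dans $U_K$. Le fait classique ci-dessus entra\^ine alors $h_K^+=1$.

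Pour l'assertion 1), je supposerais $2O_K=\LL^d$ et $h_K^+=1$, d'o\`u $h_K=1$. Le corps r\'esiduel de $\LL$ \'etant $\F_2$, le groupe $(O_K/4O_K)^*=(O_K/\LL^{2d})^*$ est d'ordre $2^{2d-1}$, donc $\Cl_{4O_K}(K)$ est un $2$-groupe fini par la suite exacte pr\'ec\'edente ; il suffit alors de montrer qu'il n'a pas de quotient d'ordre $2$, c'est-\`a-dire que $K^{4O_K}/K$ ne poss\`ede aucune sous-extension quadratique. Soit $F=K(\sqrt\theta)$ une telle sous-extension, de conducteur divisant $4O_K$ ; comme $F/K$ est non ramifi\'ee hors de $2$ et que $h_K=1$, on peut, quitte \`a multiplier $\theta$ par un carr\'e de $K^*$, supposer $(\theta)=\prod_{\pp\mid 2}\pp^{\epsilon_\pp}$ avec $\epsilon_\pp\in\{0,1\}$. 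Si un $\epsilon_\pp$ valait $1$, alors $v_\pp(\theta)=1$ et $K_\pp(\sqrt\theta)/K_\pp$ serait sauvagement ramifi\'ee de conducteur d'exposant $v_\pp(4)+1>v_\pp(4)$, contredisant que le conducteur de $F/K$ divise $4O_K$ ; donc $\theta\in U_K$. Enfin $4O_K$ n'ayant pas de partie archim\'edienne, $F/K$ est non ramifi\'ee aux places r\'eelles, donc $\theta$ est totalement positive. Ainsi $F=K(\sqrt\eta)$ avec $\eta\in U_K$ totalement positive, mais $h_K^+=1$ et $h_K=1$ forcent $\eta$ \`a \^etre un carr\'e, d'o\`u $F=K$, contradiction. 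Donc $\Cl_{4O_K}(K)=0$ et $K=K^{4O_K}$. Le corollaire est la conjonction de 1) et 2) sous l'hypoth\`ese que $2$ est totalement ramifi\'e dans $K$ — l'assertion 2) valant sans cette hypoth\`ese.

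Le point d\'elicat est l'analyse locale aux id\'eaux premiers au-dessus de $2$ : d'une part \'etablir soigneusement qu'une extension quadratique $K(\sqrt\eta)/K$ avec $\eta\in U_K$ a un conducteur divisant $4O_K$ (borne sur le discriminant), d'autre part qu'une valuation impaire en un id\'eal premier au-dessus de $2$ donne un conducteur d'exposant strictement sup\'erieur \`a $v_\pp(4)$. Ce sont des faits standard mais techniques sur les extensions quadratiques sauvagement ramifi\'ees en caract\'eristique r\'esiduelle $2$ (de fa\c con \'equivalente, sur le sous-groupe des carr\'es de $O_{K_\pp}^*$ et la filtration par les unit\'es principales) ; c'est \`a cet endroit qu'intervient de fa\c con essentielle le fait que le module consid\'er\'e est $4$ et non $2$.
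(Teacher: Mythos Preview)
Your proof is correct. For the implication $K=K^{4O_K}\Rightarrow h_K^+=1$ (assertion 2 of the paper's Th\'eor\`eme~\ref{T:th17}), your argument coincides with the paper's: both observe that for a totally positive unit $\eta$, the extension $K(\sqrt\eta)/K$ has conductor dividing $4O_K$, hence is trivial under the hypothesis.

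For the converse (assertion 1), you take a genuinely different route. The paper first establishes an intermediate characterization (Lemme~\ref{L:lemme19}): when $2$ is totally ramified, $K=K^{4O_K}$ is equivalent to $h_K=1$ together with ``every unit congruent to a square modulo $4$ is a square in $K$''. This is proved by a counting argument ($|G/G^2|=2^d=|U_K/U_K^2|$, Lemme~\ref{L:lemme18}) combined with the Frattini-subgroup fact for the $2$-group $G=(O_K/4O_K)^*$. The paper then shows that a unit $u\equiv\square\pmod 4$ yields an extension $K(\sqrt u)/K$ unramified at all finite places (citing \cite{Cox}), so $h_K^+=1$ forces $u$ to be a square. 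Your argument bypasses this intermediate characterization: you use the same observation that $G$ is a $2$-group to reduce to ruling out quadratic subextensions of $K^{4O_K}$, and then treat these directly via Kummer theory and an explicit local conductor computation at $\LL$ (the exponent $v_\LL(4)+1$ for a generator of odd valuation). Your approach is more self-contained---it avoids the external reference for the unramifiedness criterion---at the cost of carrying out the local conductor analysis by hand; the paper's route has the side benefit of isolating Lemme~\ref{L:lemme19}, an equivalent condition phrased purely in terms of units modulo~$4$, which it does not otherwise use but which may be of independent interest.
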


\subsection{R\'esultats pr\'eliminaires} Notons $U_K$ le groupe des unit\'es de $O_K$ et  $h_K$ le nombre de classes de $K$. Rappelons que $d$ d\'esigne le degr\'e de $K$ sur $\Q$. Posons  
$$G=(O_K/4O_K)^*.$$
Soit  $\varphi : U_K\to G$ le morphisme qui \`a $u\in U_K$ associe $u+4O_K$.

\begin{lemme} \label{L:lemme16} On a $K=K^{4O_K}$ si et seulement si $h_K=1$ et $\varphi$ est une surjection sur $G$. 
\end{lemme}

\begin{proof} C'est cons\'equence directe de proposition 3.2.3 de \cite{Cohen}.
\end{proof}

\begin{lemme} \label{L:lemme17} Les deux conditions suivantes sont \'equivalentes :

1)   On a $h_K^+=1$.

2)   On a $h_K=1$ et toute unit\'e totalement positive est un carr\'e dans $K$. 
\end{lemme}

\begin{proof} On a l'\'egalit\'e
(cf. \cite{Cohen}, prop. 3.2.3,  avec pour  $\mm$ le produit des places \`a l'infini)
$$h_K^+=\frac{2^d}{[U_K:U_K^+]}h_K.$$
Supposons $h_K^+=1$. On a alors $h_K=1$, puis  $[U_K:U_K^+]=2^d$. D'apr\`es le th\'eor\`eme de Dirichlet, on a $[U_K:U_K^2]=2^d$, d'o\`u $U_K^+=U_K^2$.
Inversement, si  $h_K=1$ et $U_K^+=U_K^2$, la formule ci-dessus implique $h_K^+=1$.
\end{proof}

\begin{lemme} \label{L:lemme18} Supposons $2$ totalement ramifi\'e dans $K$. On a
$$|G/G^2|=2^d.$$
\end{lemme} 

\begin{proof} Soit $a$ un \' el\'ement de $G$.  Soit $\LL$  l'id\'eal premier de $O_K$ au-dessus de $2$. On a $O_K/\LL=\F_2$, donc il existe $x\in \LL$ tel que $a=1+x +4O_K$.  On  a $a^2=1+x(2+x)+4O_K$, d'o\`u il  r\'esulte que l'on a 
$$a^2=1 \Leftrightarrow x\in 2O_K.$$
Posons  $G[2]=\lbrace z\in G \ | \ z^2=1\rbrace$.   On en d\'eduit une application $f : G[2]\to  2O_K/4O_K$ d\'efinie pour  tout 
$a\in G[2]$ par l'\'egalit\'e 
$$f(a)=x+4O_K\quad \hbox{o\`u}\quad a=1+x+4O_K.$$
C'est un isomorphisme de groupes. Le groupe  $2O_K/4O_K$ est   isomorphe  \`a $O_K/2O_K$ qui est d'ordre $2^d$. Par ailleurs, les ordres  de  $G[2]$ 
et  $G/G^2$ sont \'egaux,   d'o\`u le lemme.
\end{proof}

\begin{lemme} \label{L:lemme19} Supposons $2$ totalement ramifi\'e dans $K$. Les deux conditions suivantes sont \'equivalentes :

1)  On a  $K=K^{4O_K}$.

2)   On a $h_K=1$ et toute unit\'e congrue \`a un carr\'e modulo $4$  est un carr\'e dans $K$. 
\end{lemme}

\begin{proof} Notons $\psi : U_K\to G\to G/G^2$ le morphisme naturel d\'eduit de $\varphi$.

Supposons $K=K^{4O_K}$.  Le morphisme $\psi$ est une surjection
 (lemme~\ref{L:lemme16}). Les ordres  de $U_K/U_K^2$ et $G/G^2$ sont \'egaux (lemme~\ref{L:lemme18}). Par suite, $U_K^2$ est le noyau de $\psi$, donc 
toute unit\'e congrue \`a un carr\'e modulo $4$  est un carr\'e.

Inversement, supposons la seconde condition   satisfaite. D\'emontrons que le morphisme $\varphi$ est surjectif, ce qui,  d'apr\`es le lemme~\ref{L:lemme16},  impliquera  la premi\`ere assertion.  D'apr\`es l'hypoth\`ese faite, le noyau de $\psi$ est $U_K^2$. Les ordres de $U_K/U_K^2$ et $G/G^2$ \'etant \'egaux, on en d\'eduit  que $\psi$ est une surjection. Ainsi, l'image de $\varphi(U_K)$ dans $G/G^2$ est $G/G^2$.  
 Parce que $2$ est totalement ramifi\'e dans $K$, $G$ est un $2$-groupe (\cite{Cohen}, p. 137). Il en r\'esulte   que $\varphi(U_K)=G$, d'o\`u le r\'esultat. 
(Si $p$ est premier, le sous-groupe de Frattini   d'un $p$-groupe ab\'elien fini $A$ est $A^p$, voir  par exemple \cite{Rotman}, th. 5.48. Si $B$ est un sous-groupe de $A$ tel que $BA^p=A$, on a donc $A=B$.)  
\end{proof}

\subsection{Fin de la d\' emonstration du th\'eor\`eme 17} 

1) Supposons $2$ totalement ramifi\'e dans $K$ et $h_K^+=1$. Soit $u$ une unit\'e de $O_K$ congrue \`a un carr\'e modulo $4$. L'extension $K\left(\sqrt{u}\right)/K$ est alors partout non ramifi\'ee aux places finies de $K$ (\cite{Cox}, lemme 5.32, p. 114). On a $h_K^+=1$, donc $u$ est un carr\'e dans $K$. Vu que $h_K=1$, on a donc $K=K^{4O_K}$  (lemme~\ref{L:lemme19}). 

2) Supposons  $K=K^{4O_K}$. On a $h_K=1$. Soit $u$ une unit\'e totalement positive de $O_K$. D'apr\`es le lemme~\ref{L:lemme17}, il s'agit de montrer que $u$ est un carr\'e dans $K$.  L'extension $K\left(\sqrt{u}\right)/K$  est non ramifi\'ee aux places \`a l'infini  et en dehors des id\'eaux premiers de $O_K$ au-dessus de $2$. 
Le conducteur de  l'extension $K\left(\sqrt{u}\right)/K$ est  \'egal \`a son  discriminant (\cite{Cassels}, p. 160), qui divise $4O_K$. Par suite,  $K\left(\sqrt{u}\right)$ est contenu dans $K^{4O_K}$, d'o\`u $K\left(\sqrt{u}\right)=K$ et notre assertion.

\bigskip

Alain Kraus

\end{document}